\documentclass[12pt,reqno]{amsart}
\usepackage[pagebackref,colorlinks=true]{hyperref}
\usepackage{verbatim}
\usepackage{eucal,url,amssymb,stmaryrd,enumerate,amscd,}
\usepackage{amsfonts,amsmath,amsthm,amssymb,amscd,enumerate,eucal,url,stmaryrd}
\usepackage{mathtools}
\usepackage[margin=1in]{geometry}
\usepackage{xcolor}
\usepackage{tikz}
\usetikzlibrary{arrows.meta,positioning}

\usepackage{mathrsfs}

\numberwithin{equation}{section}

\newcommand{\R}{\mathbb{R}}

\newcommand{\Rp}{\R_{>0}}
\newcommand{\Rpp}{\R_{\geq 0}}
\newcommand{\e}{\mathrm{e}}

\makeatother

\newtheorem{Theorem}{Theorem}[section]
\newtheorem{Lemma}{Lemma}[section]
\newtheorem{Proposition}{Proposition}[section]
\newtheorem{Corollary}{Corollary}[section]
\newtheorem{Definition}{Definition}[section]
\newtheorem{Remark}{Remark}[section]

\usepackage{amsthm}

\newtheoremstyle{axiomstyle}
  {}{}                 
  {\itshape}           
  {}                   
  {\bfseries}          
  {.}                  
  {0.5em}              
  {\thmname{#1}~\thmnumber{#2}\thmnote{ (#3)}} 
  
\theoremstyle{axiomstyle}

\begin{document}

\newcommand{\add}[1]{{\color{blue}#1}}
\newcommand{\del}[1]{{\color{red}#1}}
\newenvironment{added}{\begingroup\color{blue}}{\endgroup}
\newenvironment{deleted}{%
  \begingroup\color{red}%
  \renewcommand{\cite}[2][]{\relax}%
  \renewcommand{\label}[1]{\relax}%
  \let\ref\relax
  \let\eqref\relax
  \let\pageref\relax
  \renewcommand{\section}[1]{\par\medskip\noindent{\bfseries [deleted section] ##1}\par}%
  \renewcommand{\subsection}[1]{\par\medskip\noindent{\bfseries [deleted subsection] ##1}\par}%
  \renewcommand{\subsubsection}[1]{\par\medskip\noindent{\bfseries [deleted subsubsection] ##1}\par}%
  \renewcommand{\paragraph}[1]{\par\noindent{\bfseries [deleted paragraph] ##1}\par}%
  \renewcommand{\subparagraph}[1]{\par\noindent{\bfseries [deleted subparagraph] ##1}\par}%
  \renewenvironment{equation}{\[\ignorespaces}{\]\ignorespacesafterend}%
  \renewenvironment{theorem}[1][]{\par\medskip\noindent\textbf{[deleted theorem] }\ignorespaces}{\par\medskip}%
  \renewenvironment{lemma}[1][]{\par\medskip\noindent\textbf{[deleted lemma] }\ignorespaces}{\par\medskip}%
  \renewenvironment{proposition}[1][]{\par\medskip\noindent\textbf{[deleted proposition] }\ignorespaces}{\par\medskip}%
  \renewenvironment{corollary}[1][]{\par\medskip\noindent\textbf{[deleted corollary] }\ignorespaces}{\par\medskip}%
  \renewenvironment{definition}[1][]{\par\medskip\noindent\textbf{[deleted definition] }\ignorespaces}{\par\medskip}%
  \renewenvironment{remark}[1][]{\par\medskip\noindent\textbf{[deleted remark] }\ignorespaces}{\par\medskip}%
  \renewenvironment{example}[1][]{\par\medskip\noindent\textbf{[deleted example] }\ignorespaces}{\par\medskip}%
  \renewenvironment{notation}[1][]{\par\medskip\noindent\textbf{[deleted notation] }\ignorespaces}{\par\medskip}%
  \renewenvironment{convention}[1][]{\par\medskip\noindent\textbf{[deleted convention] }\ignorespaces}{\par\medskip}%
  \renewenvironment{axiom}[1][]{\par\medskip\noindent\textbf{[deleted axiom] }\ignorespaces}{\par\medskip}%
}{\endgroup}

\begin{abstract}

 We study a rigidity problem for functions \(F:\R_{>0}\to\R_{\ge 0}\) that penalize deviation of a positive ratio from equilibrium \(x=1\).
Assuming 
(i) a d’Alembert-type composition law on \(\R_{>0}\), and (ii) a single quadratic calibration at the identity (in logarithmic coordinates), we prove that  \(F\) is uniquely determined. The composition law implies the normalization 
$F(1)=0.$
The unique solution is called the canonical reciprocal cost, namely the difference between the arithmetic and geometric means of \(x\) and its reciprocal. Our proof uses the logarithmic coordinates \(H(t)=F(e^t)+1\), where the composition law becomes d’Alembert’s functional equation on \(\R\).
The calibration provides the minimal regularity needed to invoke the classical classification of continuous solutions and fixes the remaining scaling freedom, selecting the hyperbolic-cosine branch.
We also establish necessity of each assumption: without calibration the composition law admits a continuous one-parameter family, without the composition law the calibration does not determine the global form, and without regularity the composition law admits pathological non-measurable solutions. Finally, we establish a stability estimate for approximate
solutions under bounded defect and characterize some properties of the canonical cost.

\vskip1.mm\noindent
\textbf{Keywords}:   {d'Alembert functional equation, reciprocal cost function, quadratic calibration, rigidity, hyperbolic cosine, Bregman divergence.}

\vskip1.mm
\noindent
\textbf{Mathematics Subject Classifications (2020)}: 39B52, 39B05, 39B82. 
\end{abstract}

\title[Uniqueness of the Canonical Reciprocal Cost]{Uniqueness of the Canonical Reciprocal Cost}
\date{\today}

\author{Jonathan Washburn}
\address[Jonathan Washburn]{Recognition Physics Institute Austin, Texas, USA}
\email{jon@recognitionphysics.org}
\author{Milan Zlatanovi\'c}
\address[Milan Zlatanovi\'c]{Department of Mathematics, Faculty of Science and Mathematics, University of Ni\v s, Vi\v segradska 33, 18000 Ni\v s, Serbia}
\email{zlatmilan@yahoo.com}


\maketitle 

\setcounter{tocdepth}{3}


\newcommand{\config}{\mathcal{C}}
\newcommand{\configR}{\mathcal{C}_R}

\section{Introduction}

\noindent
In the paper, we consider a function \(F:\R_{>0}\to\R_{\geq0}\)
with a minimum at \(x=1\).
Reciprocity \(F(x)=F(x^{-1})\) is natural for ratio costs, as it penalizes
reciprocal deviations equally.

\smallskip
Our central assumption is a d’Alembert-type {composition law} on \(\R_{>0}\) (simply called the composition law),
\[
F(xy)+F\Big(\dfrac{x}{y}\Big)=2F(x)F(y)+2F(x)+2F(y).
\]
Under this equation, reciprocity \(F(x)=F(x^{-1})\)
and the normalization \(F(1)=0\) follow.

Passing to logarithmic coordinates \(H(t):=F(e^t)+1\),
the composition law becomes d’Alembert’s functional equation
\[
H(t+u)+H(t-u)=2H(t)H(u).
\]
The theory of d’Alembert’s equation is classical, continuous solutions on \(\R\) are classified \cite{Aczel,Kuczma,Papp,Kannappan2,StetkaerBook,AczelDhombres,Czerwik}, discrete restrictions produce Chebyshev structure \cite{Davison}, and additional regularity assumptions yield strong rigidity \cite{Akkouchi,EbanksStetkaer}.
We use a single local calibration at equilibrium
\[
\lim_{t\to 0}\frac{2F(e^t)}{t^2}=1,
\]
which supplies the needed regularity and fixes the remaining scaling freedom.

{Assuming a d’Alembert-type composition law on \(\R_{>0}\)
and a local quadratic calibration at the identity (in logarithmic coordinates),
we prove that \(F\) is uniquely determined and equals the canonical reciprocal cost
\[
J(x)=\frac12(x+x^{-1})-1.
\]}

\smallskip

The paper is organized as follows.
Section~\ref{sec:prelim} states the main theorem, introduces the logarithmic coordinates,
and recalls the relevant facts about d’Alembert’s equation.
Section~\ref{sec:results} proves the rigidity theorem and demonstrates the necessity
of each assumption via explicit counterexamples and pathological solutions.
Section~\ref{sec:stability} establishes a quantitative consistency estimate
for approximate solutions
to the d’Alembert equation with bounded defect. 
Section~\ref{sec:properties} presents further structural properties of \(J\),
including its Bregman divergence, induced metric, and Chebyshev structure.

\section{Definitions and basic properties}\label{sec:prelim}

In this section we give the basic definitions and notation, state the main rigidity theorem, and rewrite the problem in logarithmic coordinates.
The proof of the rigidity theorem is given in Section~\ref{sec:results}.

\subsection{Logarithmic coordinates}\label{subsec:log-coords-main}
 
For any \(F:\Rp\to\R\), we introduce logarithmic coordinates \(t=\ln x\) and define
the associated functions
\begin{equation}
\label{GH-main}
G(t):=F(\e^t),\qquad H(t):=G(t)+1=F(\e^t)+1,\qquad t\in\R.
\end{equation}
If \(F(1)=0\), then \(G(0)=0\) and \(H(0)=1\).

\begin{Lemma}\label{lem:forced-normalization}
Let $F:\Rp\to\R$ satisfy the composition law
\[
F(xy)+F\!\left(\frac{x}{y}\right)=2F(x)F(y)+2F(x)+2F(y).
\]
Then either $F\equiv -1$, or $F(1)=0$.
\end{Lemma}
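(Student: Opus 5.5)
Set \(x=y=1\) in the composition law. This gives \(2F(1)=2F(1)^2+4F(1)\), that is, \(2F(1)^2+2F(1)=0\), so \(F(1)\in\{0,-1\}\). If \(F(1)=0\) we are in the second alternative and there is nothing more to prove. The whole argument is therefore the case \(F(1)=-1\), where we must show \(F\equiv -1\).

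In that case, put \(y=1\) and leave \(x\) arbitrary. The left-hand side becomes \(F(x)+F(x)=2F(x)\). The right-hand side becomes \(2F(x)F(1)+2F(x)+2F(1)=-2F(x)+2F(x)-2=-2\). Hence \(2F(x)=-2\), so \(F(x)=-1\) for every \(x\in\Rp\), i.e.\ \(F\equiv -1\).

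The same conclusion can be read off in the logarithmic coordinates \eqref{GH-main}. The case \(F(1)=-1\) means \(H(0)=0\). Setting \(u=0\) in \(H(t+u)+H(t-u)=2H(t)H(u)\) gives \(2H(t)=2H(t)H(0)=0\), so \(H\equiv 0\), which is again \(F\equiv-1\). I would state this version as a remark, since it matches the classical fact that the only solution of d'Alembert's equation with \(H(0)\neq 1\) is the zero solution.

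There is no real obstacle here. The only point requiring care is choosing the substitution \(y=1\) rather than \(x=1\) in the second step. With \(x=1\) the left side becomes \(F(y)+F(1/y)\), which mixes in reciprocity and does not immediately isolate \(F(x)\). The choice \(y=1\) makes both arguments on the left equal to \(x\), and the proof closes in one line.
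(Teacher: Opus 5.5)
Your proof is correct and is essentially the paper's argument. The paper also substitutes \(y=1\), which gives \(0=2F(1)\bigl(F(x)+1\bigr)\) for all \(x>0\) and reads the dichotomy off directly. Your preliminary step \(x=y=1\) is just the case \(x=1\) of that identity, so it is harmless but not needed.
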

\begin{proof}
Set $y=1$. Then
\[
2F(x)=2F(x)F(1)+2F(x)+2F(1),
\]
hence $0=2F(1)(F(x)+1)$ for all $x>0$.
Thus either $F(1)=0$ or $F(x)=-1$ for all $x>0$.\end{proof}

\subsection{The canonical reciprocal cost}
\begin{Definition} 
The function \(J:\Rp\to\R\) defined by
\begin{equation}\label{canonreccost}
J(x):=\frac{x+x^{-1}}{2}-1
\end{equation}
is called {\rm the canonical reciprocal cost} function.
\end{Definition}

The function \(J\) satisfies:
\begin{itemize}
    \item[(i)]  \emph{Normalization:} \(J(1)=0\);
    \item[(ii)]  \emph{Reciprocity:} \(J(x)=J(x^{-1})\);
    \item[(iii)] \emph{Nonnegativity:} for all $x\in \Rp$, \(J(x)=\frac{(x-1)^2}{2x}\ge 0\).
\end{itemize}
In logarithmic coordinates \(t=\ln x\), one has \(J(\e^t)=\cosh(t)-1\).

\subsection{Calibration}
\begin{Definition}\label{def:calibration-early}
Let \(F:\Rp\to\R\). 
The \emph{log-curvature} of \(F\), denoted \(\kappa(F)\), is defined as
\[
\kappa(F) \;:=\; \lim_{t\to 0}\frac{2\,F(\e^t)}{t^2},
\]
provided this limit exists.
\end{Definition}

When the limit exists, $\kappa(F)$ represents the quadratic coefficient in the local behavior of $F(e^t)$ at $t=0$. 
This definition does not assume a priori that $F$ is $C^2$; the existence of the limit provides the necessary quadratic behavior near $t=0$.

By the change of variables $x=e^t$, the limit exists if and only if
\[
\lim_{x\to 1}\frac{2\,F(x)}{(\ln x)^2}
\]
exists, and in that case the two limits coincide.

{ 
The condition $\kappa(F)=1$ excludes the constant solution $F\equiv -1$ and forces $F(1)=0$.}

\subsection{Main result}
We now state the main theorem, which establishes the uniqueness of the canonical reciprocal cost under the composition law and a single quadratic calibration.

\begin{Theorem}\label{thm:main}
Let \(F:\Rp\to\R\). Assume that $F$ satisfies:
\begin{enumerate}
  \item[(i)] {Composition law on \(\Rp\):} for all \(x,y>0\),
  \begin{equation}\label{eq1}
  F(xy)+F\Big(\dfrac xy\Big)=2\,F(x)\,F(y)+2\,F(x)+2\,F(y);
  \end{equation}
  \item[(ii)] \emph{Unit log-curvature} (Definition~\ref{def:calibration-early}): \(\kappa(F)=1\), i.e.
  \begin{equation}\label{behavior0}
  \lim_{t\to 0}\frac{2\,F(\e^t)}{t^2}=1.
  \end{equation}
\end{enumerate}
Then for all \(x>0\),
\[
F(x)=\frac{x+x^{-1}}{2}-1 \;=\; J(x).
\]
\end{Theorem}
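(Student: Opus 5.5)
We pass to logarithmic coordinates and reduce the problem to d'Alembert's equation on \(\R\). By Lemma~\ref{lem:forced-normalization}, either \(F\equiv -1\) or \(F(1)=0\). The constant \(-1\) gives \(2F(\e^t)/t^2=-2/t^2\to-\infty\), contradicting \(\kappa(F)=1\). Hence \(F(1)=0\), so \(H(0)=1\), where \(H(t)=F(\e^t)+1\) is as in \eqref{GH-main}. Substituting \(x=\e^t\), \(y=\e^u\) into \eqref{eq1} and adding \(2\) to both sides gives
\[
H(t+u)+H(t-u)=2H(t)H(u),\qquad t,u\in\R .
\]
Setting \(t=0\) yields \(H(-u)=H(u)\), so \(H\) is even.

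\emph{Regularity from the calibration.} The hypothesis \eqref{behavior0} gives \(H(u)-1=G(u)=\tfrac12u^2+o(u^2)\) as \(u\to0\). In particular \(H\) is continuous at \(0\) and bounded on a neighbourhood \((-\delta,\delta)\). I will then show \(H\) is continuous everywhere. Rewriting the equation as
\[
H(t+u)+H(t-u)-2H(t)=2H(t)\bigl(H(u)-1\bigr)
\]
shows that the symmetric second difference tends to \(0\) as \(u\to0\) for each fixed \(t\). This alone does not give continuity. A cleaner route is the standard fact that a solution of d'Alembert's equation bounded on a neighbourhood of \(0\) (or continuous at \(0\)) is continuous. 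To keep the argument self-contained, I would prove it by integration. Choose \(\delta\) small with \(H>1/2\) on \([-\delta,\delta]\); then \(H\) is bounded there. For now I assume \(H\) is locally integrable. Integrating in \(u\) over \([-\delta,\delta]\) gives
\[
2H(t)\int_{-\delta}^{\delta}H(u)\,du=\int_{t-\delta}^{t+\delta}H(s)\,ds+\int_{t-\delta}^{t+\delta}H(s)\,ds .
\]
The right side is continuous in \(t\) and the integral on the left is positive, so \(H\) is continuous, and bootstrapping gives \(H\in C^\infty\).

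The \textbf{main obstacle} is precisely the a priori local integrability and measurability needed to start this bootstrap. I would get it by induction from the neighbourhood of \(0\). From \(H(2t)=2H(t)^2-1\) and \(H(t+u)=2H(t)H(u)-H(t-u)\), boundedness on \([-\delta,\delta]\) propagates to boundedness on \([-2\delta,2\delta]\), and hence to every compact set. Boundedness on compact sets still does not give measurability, so I would instead cite the classical theorem (Acz\'el, Kannappan) that d'Alembert solutions continuous at \(0\) are continuous. Alternatively, I would prove continuity at each \(t\) directly: the display above gives \(H(t+u)+H(t-u)\to2H(t)\), and combining this with the product identity \(H(t+u)H(t-u)=H(t)^2+H(u)^2-1\) shows that the unordered pair \(\{H(t+u),H(t-u)\}\) converges to \(\{H(t),H(t)\}\). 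Both \(H(t\pm u)\to H(t)\) then follow, giving continuity. The product identity follows from the equation applied with arguments \((t+u,t-u)\) together with the duplication formula \(H(2s)=2H(s)^2-1\).

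\emph{Classification and calibration.} With \(H\) continuous and \(H(0)=1\), the classical classification gives \(H(t)=\cosh(\lambda t)\) for some \(\lambda\ge0\), or \(H(t)=\cos(\mu t)\) for some \(\mu\ge0\). Then \(2(H(t)-1)/t^2\to\lambda^2\) or \(-\mu^2\) respectively. The condition \(\kappa(F)=1\) rules out the cosine branch (and \(H\equiv1\)), and forces \(\lambda=1\). Hence \(F(x)=\cosh(\ln x)-1=\tfrac12(x+x^{-1})-1=J(x)\).
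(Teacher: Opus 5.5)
Your proof is correct and follows the paper's route. You exclude $F\equiv -1$ via the calibration, pass to d'Alembert's equation for $H$, and get continuity from the sum identity combined with the product identity $H(t+u)H(t-u)=H(t)^2+H(u)^2-1$, which is exactly Lemma~\ref{lem:dalembert-continuity} via Lemmas~\ref{lem:dalembert-product} and~\ref{lem:dalembert-diff-square}; you then apply the classical classification and let $\kappa(F)=1$ select $\cosh t$. The integration and bootstrapping detour depends on a local integrability you never establish and is not needed, so you should delete it and keep only your direct argument.
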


\begin{Definition}\label{def:recip-norm}
A function \(F:\Rp\to\R\) is called a \emph{reciprocal cost} if
\[
F(x)=F(x^{-1})\qquad\text{for all }x>0.
\]
It is called \emph{normalized} if $F(1)=0$.
\end{Definition}

\begin{Remark}
Reciprocity is a natural symmetry for ratio costs. 
In Proposition~\ref{prop:reciprocity-forced}, we show that reciprocity is implied by the composition law. 
\end{Remark}


\begin{Lemma}\label{lem:recip-even}
If \(F\) is reciprocal, then \(G\) and \(H\) given by \eqref{GH-main} are even.
\end{Lemma}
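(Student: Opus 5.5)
The claim follows directly from the definitions in \eqref{GH-main}, so the plan is to unwind them. Fix $t\in\R$. By definition $G(-t)=F(\e^{-t})$. Since $\e^{-t}=(\e^{t})^{-1}$, the reciprocity assumption (Definition~\ref{def:recip-norm}) applied with $x=\e^t>0$ gives
\[
G(-t)=F\bigl((\e^{t})^{-1}\bigr)=F(\e^t)=G(t).
\]
Hence $G$ is even on $\R$.

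For $H$, the definition gives
\[
H(-t)=G(-t)+1=G(t)+1=H(t),
\]
so $H$ is even as well.

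No part of this is a real obstacle. The only point to check is that $t\mapsto \e^t$ maps $\R$ onto $\Rp$, so the reciprocity identity is available at every point $\e^t$ used above. No regularity of $F$ and no use of the composition law \eqref{eq1} is needed.
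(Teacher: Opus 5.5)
Your proposal is correct and follows the paper's proof essentially verbatim: you use $\e^{-t}=(\e^t)^{-1}$ with reciprocity to get $G(-t)=G(t)$, and then $H(-t)=G(-t)+1=G(t)+1=H(t)$. Your remark that no regularity of $F$ and no use of the composition law is needed is also consistent with the paper.
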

\begin{proof}
Since \(\e^{-t}=(\e^t)^{-1}\) and \(F(x)=F(x^{-1})\), we have
\[
G(-t)=F(\e^{-t})=F\bigl((\e^t)^{-1}\bigr)=F(\e^t)=G(t).
\]
Further, for $H(-t)$, we have \[H(-t)=G(-t)+1=G(t)+1=H(t).\]
\end{proof}

\subsection{The d'Alembert functional equation}

The key structural identity in this paper is d’Ale\-mbert’s functional equation (also called the cosine equation), which is equivalent to our composition law on \(\Rp\) (Lemma~\ref{lem:equiv-rp-dalembert}).

\begin{Definition} \label{def:dalembert}
A function \(H:\R\to\R\) is said to satisfy the d'Alembert functional equation if,
for all \(t,u\in\R\),
\begin{equation}\label{dal}
H(t+u)+H(t-u)=2\,H(t)\,H(u).
\end{equation}
\end{Definition}

The equation goes back to d’Alembert \cite{dAlembert1769} and was further studied by Poisson \cite{Poisson1804} and Picard \cite{Picard1922}, and many others.
The equation plays an important role in determining the sum of two vectors in
various Euclidean and non-Euclidean geometries.
 
\smallskip

{ 
Substituting $t=u=0$ in \eqref{dal} gives $H(0)\in\{0,1\}$.
If $H(0)=0$, then \eqref{dal} forces $H(x)=0$ for all $x>0$, which implies
$F(x)=-1$ for all $x>0$. This contradicts the assumption that
$F$ is nonconstant. Hence $H(0)=1$.
}

\begin{Theorem}[\cite{Papp}]
If \(H:\R\to\R\) is continuous and satisfies \eqref{dal}, then either \(H\equiv 0\), or \(H\equiv 1\), or
\[
H(t)=\cos(kt),\qquad \text{or}\qquad H(t)=\cosh(kt),
\]
for some real constant \(k\).
\end{Theorem}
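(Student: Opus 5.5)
This is the classical classification of continuous solutions of \eqref{dal}. The plan is to reduce to an ODE-type argument via an integration trick, and then solve a second-order linear ODE.

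\emph{Trivial cases.} Setting $t=u=0$ gives $H(0)\in\{0,1\}$. If $H(0)=0$, then putting $u=0$ gives $2H(t)=0$, so $H\equiv 0$. From now on assume $H(0)=1$. Setting $t=0$ gives $H(u)+H(-u)=2H(u)$, so $H$ is even.

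\emph{Smoothing.} Since $H$ is continuous with $H(0)=1$, there is $\delta>0$ with
\[
c:=\int_0^\delta H(u)\,du\neq 0 .
\]
Integrating \eqref{dal} in $u$ over $[0,\delta]$ gives
\[
2c\,H(t)=\int_{t-\delta}^{t+\delta}H(s)\,ds .
\]
The right side is $C^1$ because $H$ is continuous, so $H\in C^1$. Bootstrapping the same identity shows $H\in C^\infty$. This is the main step to get right; everything after it is routine.

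\emph{ODE.} Differentiate \eqref{dal} twice in $u$ and set $u=0$:
\[
2H''(t)=2H(t)H''(0).
\]
Write $H''(0)=\lambda$. Then $H''=\lambda H$, with $H(0)=1$, and $H'(0)=0$ because $H$ is even.

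\emph{Solving.} The initial value problem has a unique solution, and the three cases give:
\begin{itemize}
\item[(a)] if $\lambda=0$, then $H\equiv 1$;
\item[(b)] if $\lambda=k^2>0$, then $H(t)=\cosh(kt)$;
\item[(c)] if $\lambda=-k^2<0$, then $H(t)=\cos(kt)$.
\end{itemize}
Conversely, each of $0$, $1$, $\cos(kt)$ and $\cosh(kt)$ satisfies \eqref{dal} by the addition formulas, which completes the classification.
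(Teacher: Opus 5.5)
Your proof is correct and complete. The paper gives no proof of this statement. It quotes the result from \cite{Papp} and uses it as a black box in Theorem~\ref{thm:cosh-unique}, after obtaining continuity from the calibration in Lemma~\ref{lem:dalembert-continuity}.

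The nearest thing to a proof in the paper is the ``alternative route'' of Lemmas~\ref{lem:dalembert-curvature-ode}--\ref{lem:ode-zero}. Its second half matches yours: differentiate \eqref{dal} twice in $u$ and set $u=0$ to get $H''=H''(0)\,H$, use evenness for $H'(0)=0$, and conclude by uniqueness for the linear initial value problem.

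You differ in the regularity step. The paper gets $H''$ from the calibration, by letting $h\to 0$ in $\bigl(H(t+h)-2H(t)+H(t-h)\bigr)/h^2=2H(t)\bigl(H(h)-1\bigr)/h^2$. This has two limitations:
\begin{itemize}
\item It presupposes that $\kappa_H$ exists, so it cannot handle an arbitrary continuous solution.
\item It only yields a symmetric (Schwarz) second derivative, so the claimed $H\in C^2$ tacitly needs an extra Schwarz-type lemma.
\end{itemize}
Your averaging identity $2c\,H(t)=\int_{t-\delta}^{t+\delta}H(s)\,ds$ needs only continuity and gives genuine $C^\infty$ regularity by bootstrapping. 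So it proves the classification exactly as stated, and would make the paper self-contained where it cites \cite{Papp}. The paper's route, in return, identifies the ODE constant directly as $\kappa_H$, which is all Theorem~\ref{thm:cosh-unique} uses.
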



More generally, the following holds.
\begin{Theorem}[\cite{Kannappan}]\label{thm:kannappan-general} The general complex-valued solutions of d'Alembert's functional equation \eqref{dal}
on the cartesian square of an abelian group $G$ are given by
\[
H(x)=\frac{h(x)+h(-x)}{2},
\]
where \(h : G \to \mathbb{C}\) satisfies 
\(
h(x+y)=h(x)h(y), \; x,y\in G.
\)
\end{Theorem}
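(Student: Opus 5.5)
The plan is to prove both directions, with the converse doing most of the work.

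\textbf{Easy direction.} If $h(x+y)=h(x)h(y)$ and $H(x)=\tfrac12\bigl(h(x)+h(-x)\bigr)$, expand $H(x+y)+H(x-y)$ into four exponential terms and regroup:
\[
H(x+y)+H(x-y)=\tfrac12\bigl(h(x)+h(-x)\bigr)\bigl(h(y)+h(-y)\bigr)=2H(x)H(y).
\]
So every such $H$ solves \eqref{dal}.

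\textbf{Converse: preliminary reductions.} Let $H:G\to\mathbb{C}$ solve \eqref{dal}.
\begin{itemize}
\item If $H\equiv 0$, take $h\equiv 0$.
\item Otherwise, exactly as in the computation preceding the theorem, putting $x=y=0$ gives $H(0)^2=H(0)$. The value $H(0)=0$ is excluded because $y=0$ would then give $2H(x)=0$ for all $x$. Hence $H(0)=1$.
\item Putting $x=0$ in \eqref{dal} gives $H(y)+H(-y)=2H(y)$, so $H$ is even.
\end{itemize}

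\textbf{Converse: case analysis.}
\begin{itemize}
\item Suppose $H(x)^2=1$ for every $x$. Then $H$ takes values in $\{\pm1\}$, and $y=x$ gives $H(2x)=2H(x)^2-1=1$. From \eqref{dal}, $H(x+y)+H(x-y)=2H(x)H(y)=\pm 2$, so $H(x+y)=H(x-y)=H(x)H(y)$. Hence $H$ is itself a character, and $h=H$ works.
\item Otherwise fix $a\in G$ with $H(a)^2\neq 1$. Choose $c\in\mathbb{C}$ with $c^2=H(a)^2-1\neq 0$. Define
\[
g(x):=\frac{H(x+a)-H(a)H(x)}{c},\qquad h:=H+g.
\]
\end{itemize}
The target is to show that $g$ is odd and that the pair $(H,g)$ satisfies the addition formulas
\[
H(x+y)=H(x)H(y)+g(x)g(y),\qquad g(x+y)=g(x)H(y)+H(x)g(y).
\]
Adding these two formulas gives $h(x+y)=h(x)h(y)$. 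Oddness of $g$ together with evenness of $H$ gives $\tfrac12\bigl(h(x)+h(-x)\bigr)=H(x)$, which finishes the proof.

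\textbf{Oddness of $g$.} Write
\[
c\,g(-x)=H(a-x)-H(a)H(x).
\]
Then $c\bigl(g(x)+g(-x)\bigr)=H(x+a)+H(x-a)-2H(a)H(x)$, using evenness of $H$. This is $0$ by \eqref{dal}, so $g$ is odd.

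\textbf{Main obstacle: the addition formulas.} I would first prove
\[
g(x)^2=H(x)^2-1.
\]
Expanding $c^2g(x)^2$ produces the terms $H(x+a)^2$, $H(x+a)H(x)$ and $H(x)^2$. To handle them I would establish the auxiliary identity
\[
H(x+y)H(x-y)=H(x)^2+H(y)^2-1,
\]
obtained by applying \eqref{dal} to the pairs $(x+y,x-y)$ and $(2x,2y)$ and using the duplication formula $H(2z)=2H(z)^2-1$. Next I would substitute $y\mapsto y\pm a$ into \eqref{dal} and combine with the definition of $g$. This should yield the product formula
\[
g(x)g(y)=H(x+y)-H(x)H(y),
\]
which is exactly the first addition law. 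For the second law, I would compute $c\,g(x+y)=H(x+y+a)-H(a)H(x+y)$ and reduce both terms using the first law together with \eqref{dal} applied to the triples $(x,y,a)$. This bookkeeping is where I expect most of the difficulty. If direct manipulation stalls, the fallback is the standard route: show that $g$ satisfies the sine equation
\[
g(x+y)g(x-y)=g(x)^2-g(y)^2
\]
and then invoke the classical link between sine and cosine solutions.
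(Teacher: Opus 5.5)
The paper does not prove this statement; it quotes it from Kannappan as background, so your proposal has to stand on its own. Everything you actually carry out is correct:
\begin{itemize}
\item the easy direction;
\item the reduction to $H(0)=1$ with $H$ even;
\item the $\{\pm1\}$-valued case;
\item the oddness of $g$;
\item the assembly of $h=H+g$ from the two addition laws.
\end{itemize}
The gap is that those two addition laws, which contain the entire content of the theorem, are only announced (``this should yield'', ``where I expect most of the difficulty''). The tools you name would not deliver them. First, the identity $H(x+y)H(x-y)=H(x)^2+H(y)^2-1$ evaluates a product $H(P)H(Q)$ only when $P+Q\in 2G$. So it does not reach the cross term $H(x+a)H(x)$ in a general abelian group (take $G=\mathbb{Z}$ and $a$ odd). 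Second, $g(x)^2=H(x)^2-1$ is merely the diagonal case $y=-x$ of the product formula, and you give no mechanism for passing from the diagonal to arbitrary $(x,y)$. Third, the fallback through the sine equation and ``the classical link between sine and cosine solutions'' invokes a result of the same depth as the one you are proving.

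The missing idea is to read \eqref{dal} directly as a product-to-sum rule, $2H(u)H(v)=H(u+v)+H(u-v)$, and to observe that $\phi:=c\,g$, i.e.\ $\phi(x)=H(x+a)-H(a)H(x)$, satisfies Wilson's equation. Indeed, \eqref{dal} at $(x+a,y)$ and at $(x,y)$ gives $\phi(x+y)+\phi(x-y)=2\phi(x)H(y)$. Interchanging $x,y$ and using that $\phi$ is odd gives $\phi(x+y)-\phi(x-y)=2H(x)\phi(y)$. Adding the two yields the second addition law at once, independently of the first.

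For the first law, expand $\phi(x)\phi(y)$ and apply the product-to-sum rule to $H(x+a)H(y+a)$, $H(x)H(y+a)$ and $H(x+a)H(y)$. Then substitute $H(x+y+2a)=2H(a)H(x+y+a)-H(x+y)$, $H(x-y+a)+H(x-y-a)=2H(a)H(x-y)$ and $H(x-y)=2H(x)H(y)-H(x+y)$. Every term with $a$ inside the argument cancels, and
\[
\phi(x)\phi(y)=\bigl(H(a)^2-1\bigr)\bigl(H(x+y)-H(x)H(y)\bigr),
\]
which is $g(x)g(y)=H(x+y)-H(x)H(y)$ after division by $c^2$.

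Your own route to the second law can also be completed. Applying \eqref{dal} at $(y+a,a)$ gives $g(y+a)=c\,H(y)+H(a)g(y)$. Then insert $H(x+y+a)=H(x)H(y+a)+g(x)g(y+a)$, $H(x+y)=H(x)H(y)+g(x)g(y)$ and $H(y+a)=H(a)H(y)+c\,g(y)$ into $c\,g(x+y)=H(x+y+a)-H(a)H(x+y)$, and the second law drops out.

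With these computations supplied, your outline becomes a complete, self-contained proof that needs no halving in $G$ and no regularity.
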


In this paper we consider d’Alembert’s functional equation on the group
\(G=(\mathbb{R},+)\) for functions \(H:\mathbb{R}\to\mathbb{R}\).

If one assumes continuity of $H$ (as we will later obtain from the curvature calibration), then the multiplicative solutions \(h\) have the classical exponential form \(h(t)=e^{ct}\) with \(c\in \mathbb{C}\), and hence
\[
H(t)=\frac{e^{ct}+e^{-ct}}{2}.
\]
For real-valued \(H\), this reduces to the standard cosine/hyperbolic-cosine classification (cf. \cite{Aczel,Kuczma,Papp}).


\begin{Lemma} \label{lem:dalembert-even}
If \(H\) satisfies Definition~\ref{def:dalembert}, then \(H\) is even.
\end{Lemma}
\begin{proof}
Fix \(u\in\R\) and apply the d'Alembert equation (\ref{dal}) with \(t=0\):
\[
H(u)+H(-u)=2\,H(0)\,H(u)=2H(u),
\]
so \(H(-u)=H(u)\).
\end{proof}

\begin{Lemma}\label{lem:dalembert-product}
If \(H\) satisfies Definition~\ref{def:dalembert}, then for all \(t,u\in\R\),
\[
H(t+u)\,H(t-u)=H(t)^2+H(u)^2-1.
\]
\end{Lemma}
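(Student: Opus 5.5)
The plan is to obtain the product identity from two specializations of \eqref{dal}, namely a duplication formula and a change of variables, with no regularity needed. Throughout I use the normalization $H(0)=1$, established in the discussion preceding Papp's theorem: substituting $t=u=0$ gives $H(0)\in\{0,1\}$, and the case $H(0)=0$ forces $H\equiv 0$. This normalization is genuinely needed, since for $H\equiv 0$ the left side is $0$ while the right side is $-1$. The statement should therefore be read for nonzero solutions, which is the only case relevant to the paper.

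\emph{Step 1 (duplication).} Put $u=t$ in \eqref{dal}. This gives $H(2t)+H(0)=2H(t)^2$, hence
\[
H(2t)=2H(t)^2-1\qquad\text{for all }t\in\R,
\]
and similarly $H(2u)=2H(u)^2-1$.

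\emph{Step 2 (substitution).} Apply \eqref{dal} with the pair $(t+u,\,t-u)$ in place of $(t,u)$. The sum of the arguments is $2t$ and their difference is $2u$, so
\[
H(2t)+H(2u)=2\,H(t+u)\,H(t-u).
\]

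\emph{Step 3 (combine).} Substitute the duplication formulas from Step~1 into the left side of Step~2. This gives $2H(t)^2+2H(u)^2-2=2H(t+u)H(t-u)$, and dividing by $2$ yields the claim.

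There is no real obstacle. The only point requiring care is the use of $H(0)=1$ in Step~1, which is where the trivial solution $H\equiv 0$ must be excluded.
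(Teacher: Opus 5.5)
Your proof is correct and is the same argument as the paper's: the duplication formula $H(2t)=2H(t)^2-1$ from $u=t$ and $H(0)=1$, followed by applying \eqref{dal} to the pair $(t+u,\,t-u)$. Your remark that the identity fails for the trivial solution $H\equiv 0$ is accurate, since the paper uses $H(0)=1$ tacitly here, so restricting to nonzero solutions is the right reading of the statement.
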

\begin{proof}
Apply (\ref{dal}) with \(a=t+u\) and \(b=t-u\):
\[
H((t+u)+(t-u))+H((t+u)-(t-u))=2H(t+u)H(t-u),
\]
so \[H(2t)+H(2u)=2H(t+u)H(t-u).\] Using that
\[
H(2t)=2H(t)^2-1
\]
(obtained from (\ref{dal}) with \((t,t)\) and \(H(0)=1\)), and similarly for \(u\), yields the claim.
\end{proof}

\begin{Lemma}\label{lem:dalembert-diff-square}
If \(H\) satisfies Definition~\ref{def:dalembert}, then for all \(t,u\in\R\),
\[
\bigl(H(t+u)-H(t-u)\bigr)^2=4\,(H(t)^2-1)\,(H(u)^2-1).
\]
\end{Lemma}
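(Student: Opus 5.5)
The plan is to expand the square on the left, rewrite the cross term with Lemma~\ref{lem:dalembert-product}, and then factor what remains using d'Alembert's equation \eqref{dal} together with the duplication formula $H(2s)=2H(s)^2-1$. The duplication formula was derived inside the proof of Lemma~\ref{lem:dalembert-product} and uses $H(0)=1$. As in that lemma, we work with nonzero solutions, so that $H(0)=1$.

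First, write
\[
\bigl(H(t+u)-H(t-u)\bigr)^2=\bigl(H(t+u)+H(t-u)\bigr)^2-4H(t+u)H(t-u).
\]
This identity is elementary, $(a-b)^2=(a+b)^2-4ab$. By \eqref{dal}, the first term equals $4H(t)^2H(u)^2$. By Lemma~\ref{lem:dalembert-product}, the second term equals $4\bigl(H(t)^2+H(u)^2-1\bigr)$.

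Subtracting gives
\[
4\bigl(H(t)^2H(u)^2-H(t)^2-H(u)^2+1\bigr)=4\bigl(H(t)^2-1\bigr)\bigl(H(u)^2-1\bigr),
\]
which is the claim.

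There is no real obstacle here. The only point needing care is the normalization $H(0)=1$ behind Lemma~\ref{lem:dalembert-product}, which is the standing assumption excluding $H\equiv 0$. If $H\equiv 0$, both sides of the identity vanish anyway, so the statement holds in that case too.
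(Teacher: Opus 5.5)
Your main computation is the paper's proof. You write $(A-B)^2=(A+B)^2-4AB$ with $A=H(t+u)$ and $B=H(t-u)$. You take $A+B=2H(t)H(u)$ from \eqref{dal} and $AB=H(t)^2+H(u)^2-1$ from Lemma~\ref{lem:dalembert-product}, then factor. This is correct under the normalization $H(0)=1$.

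Your closing remark, however, is false. For $H\equiv 0$ the left-hand side is $0$, but the right-hand side is $4(0-1)(0-1)=4$. So the identity fails for the zero solution, and Lemma~\ref{lem:dalembert-product} fails there too, since it would read $0=-1$.

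The hypothesis $H(0)=1$, equivalently $H\not\equiv 0$, is therefore genuinely necessary. The lemma holds only for nonzero solutions, which the paper assumes tacitly through its preceding remark forcing $H(0)=1$. You should state this restriction explicitly rather than claim the degenerate case is covered.
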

\begin{proof}
Let \(A:=H(t+u)\) and \(B:=H(t-u)\). Then \(A+B=2H(t)H(u)\) by Definition \ref{def:dalembert}, and \(AB=H(t)^2+H(u)^2-1\)
by Lemma~\ref{lem:dalembert-product}. Hence
\begin{align*}
        (A-B)^2&=(A+B)^2-4AB\\
&=4H(t)^2H(u)^2-4(H(t)^2+H(u)^2-1)\\
&=4(H(t)^2-1)(H(u)^2-1).
\end{align*}
\end{proof}

\begin{Lemma}\label{lem:dalembert-continuity}
{If \(H\) satisfies Definition~\ref{def:dalembert} and if 
\(\lim_{t\to 0}2(H(t)-1)/t^2\) exists, then \(H\) is continuous on \(\R\).}
\end{Lemma}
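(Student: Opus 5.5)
The plan is to show first that $H$ is continuous at $0$, and then to transport continuity to every point using Lemma~\ref{lem:dalembert-diff-square}.

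\emph{Step 1: continuity at $0$.} Let $L:=\lim_{t\to0}2(H(t)-1)/t^2$, which exists by hypothesis. Then $H(t)-1 = \tfrac{t^2}{2}\bigl(L+o(1)\bigr)$, so $H(t)\to1$ as $t\to0$. We need $H(0)=1$ to conclude continuity at $0$. Setting $t=u=0$ in \eqref{dal} gives $H(0)\in\{0,1\}$. If $H(0)=0$, then taking $t=0$ in \eqref{dal} gives $H(u)+H(-u)=0$. Taking $u=t$ gives $H(2t)=2H(t)^2$, while taking $t=u$ in the original equation gives $H(2t)+H(0)=2H(t)^2$, which is the same relation. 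To rule this case out, put $u=t$ and $u=-t$ into \eqref{dal} with $H$ odd; this yields $H(2t)=2H(t)^2$ and $H(0)=-2H(t)^2$, so $H\equiv0$. That contradicts $H(t)\to1$. Hence $H(0)=1$ and $H$ is continuous at $0$. In the setting of the paper, the hypothesis that the limit exists already implicitly requires $H(0)=1$ near $0$ (equivalently $F(1)=0$), so this check is a short remark.

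\emph{Step 2: continuity everywhere.} Fix $t\in\R$ and let $u\to0$. By Lemma~\ref{lem:dalembert-diff-square},
\[
\bigl(H(t+u)-H(t-u)\bigr)^2=4\,(H(t)^2-1)\,(H(u)^2-1)\longrightarrow 0,
\]
since $H(u)\to1$. This only controls the symmetric difference, so I also use the sum: \eqref{dal} gives $H(t+u)+H(t-u)=2H(t)H(u)\to 2H(t)$. Writing
\[
H(t+u)=\tfrac12\bigl[(H(t+u)+H(t-u))+(H(t+u)-H(t-u))\bigr],
\]
the first bracket tends to $2H(t)$ and the second tends to $0$. Therefore $H(t+u)\to H(t)$ as $u\to0$, which is continuity at $t$.

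\emph{Main obstacle.} The only delicate point is separating $H(t+u)$ from $H(t-u)$. The sum identity alone would give only "midpoint" continuity. Combining it with the squared-difference identity of Lemma~\ref{lem:dalembert-diff-square} is what resolves this. The rest, namely the value $H(0)=1$ and the limit $H(u)\to1$ from the quadratic calibration, is routine.
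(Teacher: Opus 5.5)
Your proposal is correct and follows essentially the same route as the paper: continuity at $0$ from the calibration limit, then you combine $H(t+u)+H(t-u)=2H(t)H(u)\to 2H(t)$ with Lemma~\ref{lem:dalembert-diff-square} to separate $H(t+u)$ from $H(t-u)$. Your Step~1 check of $H(0)=1$, which the paper takes from its earlier remark, is fine, though $u=0$ in \eqref{dal} gives $2H(t)=2H(t)H(0)$ and settles it in one line; also, the $u=-t$ substitution yields $H(0)+H(2t)=-2H(t)^2$, not $H(0)=-2H(t)^2$ as you wrote.
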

\begin{proof}
The limit assumption implies that \(\lim_{t\to 0} H(t)=1\).
Since \(H(0)=1\), it follows that \(H\) is continuous at \(0\).

Fix \(t\in\R\). For \(u\to 0\), the equation (\ref{dal}) gives
\[
\lim_{u\to 0}\bigl(H(t+u)+H(t-u)\bigr)
=2H(t)\lim_{u\to 0}H(u)
=2H(t).
\]
By Lemma~\ref{lem:dalembert-diff-square}, we have
\[
\lim_{u\to 0}\bigl(H(t+u)-H(t-u)\bigr)^2
=4\bigl(H(t)^2-1\bigr)\lim_{u\to 0}\bigl(H(u)^2-1\bigr)=0,
\]
hence
\[
\lim_{u\to 0}\bigl(H(t+u)-H(t-u)\bigr)=0.
\]
Moreover,
\[
H(t+u)
=\frac{H(t+u)+H(t-u)}{2}
+\frac{H(t+u)-H(t-u)}{2}.
\]
Taking limits as \(u\to 0\) and using
\[
\lim_{u\to 0}\bigl(H(t+u)+H(t-u)\bigr)=2H(t),
\qquad
\lim_{u\to 0}\bigl(H(t+u)-H(t-u)\bigr)=0,
\]
we obtain
\[
\lim_{u\to 0} H(t+u)=H(t).
\]
Similarly, \(\lim_{u\to 0} H(t-u)=H(t)\).
Therefore, \(H\) is continuous at every \(t\in\R\).
\end{proof}

Let \(H\) satisfy \eqref{dal} and set \(G:=H-1\).
In this case, a direct calculation shows that \(G\) satisfies
\[
G(t+u)+G(t-u)=2\,G(t)\,G(u)+2\,G(t)+2\,G(u).
\]
For example (see \cite{Papp}), the function
\[
H(t)=J(\e^t)+1=\cosh(t)
\]
satisfies the d'Alembert equation~(\ref{dal}), and the corresponding
\(G(t)=\cosh(t)-1\) satisfies the identity above.

\subsection{Composition law on \texorpdfstring{$\Rp$}{R\_>0}}\label{sec:rp-law}

Although our main theorem is stated on \(\Rp\), the composition law is most transparent after passing to logarithmic coordinates, where it reduces exactly to d'Alembert's equation for \(H(t)=F(e^t)+1\).

\begin{Definition}\label{def:rp-law}
A function \(F:\Rp\to\R\) satisfies the \emph{composition law on \(\Rp\)} if for all
\(x,y>0\),
\begin{equation}\label{comp}
F(xy)+F\Big(\dfrac xy\Big)=2\,F(x)\,F(y)+2\,F(x)+2\,F(y).
    \end{equation}
\end{Definition}

\begin{Lemma}\label{lem:equiv-rp-dalembert}
Let \(F:\Rp\to\R\), and  \(H:\R\to\R\) such that \(H(t)=F(\e^t)+1\).
Then \(F\) satisfies Definition~\ref{def:rp-law} if and only if \(H\) satisfies the d'Alembert
equation (\ref{dal}).
\end{Lemma}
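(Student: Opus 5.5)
The plan is a direct substitution, using the fact that \(t\mapsto \e^t\) is a bijection from \(\R\) onto \(\Rp\). Every pair \(x,y>0\) can be written uniquely as \(x=\e^t\), \(y=\e^u\) with \(t,u\in\R\). Then \(xy=\e^{t+u}\) and \(x/y=\e^{t-u}\), so \(F(xy)=H(t+u)-1\), \(F(x/y)=H(t-u)-1\), \(F(x)=H(t)-1\) and \(F(y)=H(u)-1\). Conversely, every pair \((t,u)\in\R^2\) arises from a pair \((x,y)\in\Rp^2\) in this way. Hence the identity \eqref{comp} for all \(x,y>0\) is equivalent to the corresponding identity in \(H\) for all \(t,u\in\R\).

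The only computation is to rewrite \eqref{comp} in terms of \(H\). The left side becomes \(H(t+u)+H(t-u)-2\). The right side becomes
\[
2(H(t)-1)(H(u)-1)+2(H(t)-1)+2(H(u)-1).
\]
Expanding gives \(2H(t)H(u)-2H(t)-2H(u)+2+2H(t)-2+2H(u)-2=2H(t)H(u)-2\). Adding \(2\) to both sides yields exactly \eqref{dal}. Since every step is a reversible algebraic identity, the two equations are equivalent pointwise. That is, \eqref{comp} holds at \((\e^t,\e^u)\) if and only if \eqref{dal} holds at \((t,u)\). Quantifying over all pairs and using the bijection gives both directions of the lemma.

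I do not expect a genuine obstacle here. The only point needing care is that the quantifiers match: surjectivity of \(\exp\) onto \(\Rp\) is used for the direction \eqref{comp} \(\Rightarrow\) \eqref{dal}, and the fact that \(\exp\) maps \(\R\) into \(\Rp\) is used for the converse. No normalization such as \(F(1)=0\) or \(H(0)=1\) is needed, since the equivalence is purely algebraic.
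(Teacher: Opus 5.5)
Your proof is correct and takes essentially the same route as the paper: substitute $x=\e^t$, $y=\e^u$, and use the identity $2F(x)F(y)+2F(x)+2F(y)+2=2(F(x)+1)(F(y)+1)$ to turn \eqref{comp} into \eqref{dal}; the converse is the same computation run backwards. One small slip in your last paragraph is that the roles are swapped: \eqref{comp}$\Rightarrow$\eqref{dal} only needs $\exp$ to map $\R$ into $\Rp$, and surjectivity onto $\Rp$ is what \eqref{dal}$\Rightarrow$\eqref{comp} uses — but since you invoke the full bijection, the argument is unaffected.
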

\begin{proof}
Assume \(F\) satisfies Definition~\ref{def:rp-law}. Let \(t,u\in\R\) and set \(x=\e^t\), \(y=\e^u\). Then
\begin{align*}
H(t+u)+H(t-u)
&=\bigl(F(\e^{t+u})+1\bigr)+\bigl(F(\e^{t-u})+1\bigr)\\
&=\bigl(F(xy)+F\Big(\dfrac xy\Big)\bigr)+2\\
&=\bigl(2F(x)F(y)+2F(x)+2F(y)\bigr)+2\\
&=2\bigl(F(x)+1\bigr)\bigl(F(y)+1\bigr)
=2H(t)H(u),
\end{align*}
so \(H\) satisfies (\ref{dal}).

Conversely, if \(H\) satisfies (\ref{dal}), by reverse calculation with \(x=\e^t\), \(y=\e^u\), we obtain
that $F$ satisfies Definition~\ref{def:rp-law}.
\end{proof}

\section{Main results}\label{sec:results}

This section contains the rigidity argument for Theorem~\ref{thm:main}.
Using logarithmic coordinates \(H(t)=F(e^t)+1\), the composition law on \(\Rp\) becomes d’Alembert’s equation~\eqref{dal}.
We verify the assumptions for the canonical cost \(J\), classify the solutions, and obtain uniqueness.
We also briefly discuss which assumptions are essential.

\begin{Lemma}\label{lem:J-meets}
Let
\(
J(x)=\tfrac12\bigl(x+x^{-1}\bigr)-1, \, x>0.
\)
Then the following properties hold:
\begin{enumerate}
  \item[(i)] \(J\) is reciprocal and normalized: \(J(x)=J(x^{-1})\) for all \(x>0\) and \(J(1)=0\).
  \item[(ii)] \(J\) satisfies the composition law on \(\Rp\) (Definition~\ref{def:rp-law}).
  \item[(iii)] \(J\) has unit log-curvature: \(\kappa(J)=1\).
\end{enumerate}
\end{Lemma}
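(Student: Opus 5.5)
The plan is to verify the three items directly, working in logarithmic coordinates $t=\ln x$ wherever possible. Throughout we use the identity $J(\e^t)=\cosh(t)-1$ noted after the definition of $J$.

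For (i), we substitute directly. Replacing $x$ by $x^{-1}$ in $\tfrac12(x+x^{-1})-1$ only swaps the two summands, so $J(x^{-1})=J(x)$. Evaluating at $x=1$ gives $J(1)=\tfrac12(1+1)-1=0$.

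For (ii), we avoid expanding the composition law on $\Rp$ by hand and instead invoke Lemma~\ref{lem:equiv-rp-dalembert}. By that lemma, it suffices to show that $H(t):=J(\e^t)+1=\cosh t$ satisfies the d'Alembert equation \eqref{dal}. This is the addition formula
\[
\cosh(t+u)+\cosh(t-u)=2\cosh t\cosh u.
\]
To verify it, we write $\cosh s=\tfrac12(\e^{s}+\e^{-s})$ and expand the right-hand side:
\[
\tfrac12(\e^t+\e^{-t})(\e^u+\e^{-u})=\tfrac12\bigl(\e^{t+u}+\e^{t-u}+\e^{-t+u}+\e^{-t-u}\bigr),
\]
which equals the left-hand side. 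As an alternative that does not use the lemma, one can check \eqref{comp} directly: with $x,y>0$, both sides reduce to $\tfrac12\bigl(xy+(xy)^{-1}+x/y+y/x\bigr)-2$. We may include this as a sanity check.

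For (iii), we compute the limit in Definition~\ref{def:calibration-early}:
\[
\frac{2J(\e^t)}{t^2}=\frac{2(\cosh t-1)}{t^2}.
\]
Since $\cosh t-1=2\sinh^2(t/2)$, this expression equals $\bigl(\sinh(t/2)/(t/2)\bigr)^2$, which tends to $1$ as $t\to0$. Equivalently, the Taylor expansion $\cosh t=1+t^2/2+O(t^4)$ gives the same limit.

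None of these steps presents a real obstacle. The only point needing care is citing Lemma~\ref{lem:equiv-rp-dalembert} in the correct direction (d'Alembert for $H$ implies the composition law for $F$), which is the converse part of that lemma.
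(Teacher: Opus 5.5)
Your proposal is correct and follows essentially the same route as the paper: direct substitution for (i), passing to $H(t)=J(\e^t)+1=\cosh t$ and using the d'Alembert identity for (ii), and the Taylor expansion of $\cosh$ at $0$ for (iii). The only differences are cosmetic. The paper redoes the substitution $t=\ln x$, $u=\ln y$ inline instead of citing the converse direction of Lemma~\ref{lem:equiv-rp-dalembert}, and it takes the $\cosh$ addition formula for granted where you verify it.
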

\begin{proof}
(i) Reciprocity follows directly from the definition of $J(x)$. Also \(J(1)=\tfrac12(1+1)-1=0\).

\noindent(ii) Let \(H(t)=J(\e^t)+1, \; t\in\R.\)
Let us first compute \(H\)
\begin{align*}
H(t)
&= \left(\frac12\bigl(\e^t+\e^{-t}\bigr)-1\right)+1
 = \frac12\bigl(\e^t+\e^{-t}\bigr)
 = \cosh(t).
\end{align*}
Hence \(H(t)=\cosh(t)\) for all \(t\in\R\).

The function \(\cosh\) satisfies the d'Alembert equation  
\begin{equation}\label{eq:H-dalembert}
H(t+u)+H(t-u)=2\,H(t)\,H(u)\qquad\text{for all }t,u\in\R.
\end{equation}
Let \(x,y>0\) and
\[
t=\ln x,\qquad u=\ln y.
\]
Then \(\e^t=x\), \(\e^u=y\), and consequently
\[
\e^{t+u}=xy,\qquad \e^{t-u}=\frac{x}{y}.
\]
Using the definition of \(H\), we can rewrite \eqref{eq:H-dalembert} as
\begin{align*}
\bigl(J(\e^{t+u})+1\bigr)+\bigl(J(\e^{t-u})+1\bigr)
&=2\bigl(J(\e^t)+1\bigr)\bigl(J(\e^u)+1\bigr).
\end{align*}
Substituting \(\e^{t+u}=xy\), \(\e^{t-u}=x/y\), \(\e^t=x\), and \(\e^u=y\), we obtain
\[
\bigl(J(xy)+1\bigr)+\bigl(J\Big(\dfrac xy\Big)+1\bigr)
=2\bigl(J(x)+1\bigr)\bigl(J(y)+1\bigr).
\]
Therefore, $J$ satisfies the composition law on
\(\Rp\) given by Definition~\ref{def:rp-law}:
\[
J(xy)+J\Bigl(\frac{x}{y}\Bigr)
=2\,J(x)\,J(y)+2\,J(x)+2\,J(y),
\qquad x,y>0.
\]

\noindent (iii) Using \(J(\e^t)=\cosh(t)-1\) and the Taylor expansion
\(\cosh(t)=1+t^2/2+o(t^2)\) as \(t\to 0\), we have
\[
\lim_{t\to 0}\frac{2J(\e^t)}{t^2}
=\lim_{t\to 0}\frac{2(\cosh(t)-1)}{t^2}=1,
\]
so \(\kappa(J)=1\).
\end{proof}

The next theorem shows that a calibrated d’Alembert solution is forced onto a single cosine/hyperbolic-cosine branch, with the calibration determining the parameter.

\begin{Theorem}\label{thm:cosh-unique}
Let \(H:\R\to\R\) satisfy the d'Alembert equation (\ref{dal}).
Assume the following limit exists:
\begin{equation}\label{kal}
\kappa_H:=\lim_{t\to 0}\frac{2\,(H(t)-1)}{t^2}\in\R.
\end{equation}
Then:
\begin{enumerate}
  \item If \(\kappa_H>0\), then \(H(t)=\cosh(\sqrt{\kappa_H}\,t)\) for all \(t\in\R\).
  \item If \(\kappa_H<0\), then \(H(t)=\cos(\sqrt{-\kappa_H}\,t)\) for all \(t\in\R\).
  \item If \(\kappa_H=0\), then \(H(t)=1\) for all \(t\in\R\).
\end{enumerate}
In particular, if \(\kappa_H=1\), then \(H(t)=\cosh(t)\) for all \(t\in\R\).
\end{Theorem}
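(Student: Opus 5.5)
The plan is to reduce to the classical classification of continuous solutions and then read off the parameter from the calibration. First I pin down $H(0)$. Setting $t=u=0$ in \eqref{dal} gives $H(0)\in\{0,1\}$. The existence of the finite limit \eqref{kal} forces $H(t)\to 1$ as $t\to 0$. If $H(0)=0$, then putting $t=0$ in \eqref{dal} gives $H(u)+H(-u)=0$ for all $u$, so $H$ is odd. But then $H(-t)\to 1$ and $-H(t)\to -1$ as $t\to 0$, which is a contradiction. Hence $H(0)=1$, so Lemmas~\ref{lem:dalembert-even}--\ref{lem:dalembert-diff-square} apply. Lemma~\ref{lem:dalembert-continuity} then shows that $H$ is continuous on $\R$.

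Next I apply the theorem of \cite{Papp} to the continuous solution $H$. This gives $H\equiv 0$, $H\equiv 1$, $H(t)=\cos(kt)$, or $H(t)=\cosh(kt)$ for some real $k$. The case $H\equiv 0$ is excluded since $H(0)=1$. Note that $H\equiv1$ is the case $k=0$ of either branch. By evenness of $\cos$ and $\cosh$ I may take $k\ge 0$.

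Finally I compute $\kappa_H$ on each branch using the Taylor expansions $\cosh(kt)=1+k^2t^2/2+o(t^2)$ and $\cos(kt)=1-k^2t^2/2+o(t^2)$. These give $\kappa_H=k^2$ on the $\cosh$ branch and $\kappa_H=-k^2$ on the $\cos$ branch, while $H\equiv1$ gives $\kappa_H=0$. The sign of $\kappa_H$ therefore determines the branch, and its modulus determines $k$:
\begin{itemize}
\item If $\kappa_H>0$, we are on the $\cosh$ branch with $k=\sqrt{\kappa_H}$.
\item If $\kappa_H<0$, we are on the $\cos$ branch with $k=\sqrt{-\kappa_H}$.
\item If $\kappa_H=0$, then $k=0$ and $H\equiv1$.
\end{itemize}
The special case $\kappa_H=1$ then gives $H=\cosh$.

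The only delicate point is the first step: excluding $H(0)=0$ and securing continuity. The text before Theorem~\ref{thm:kannappan-general} argues this via nonconstancy of $F$, but here it must be derived from the limit hypothesis alone, which the oddness argument above does. Everything afterwards is a direct invocation of \cite{Papp} plus elementary expansions. One could alternatively avoid \cite{Papp} by using continuity and \eqref{dal} to show $H$ is $C^2$ with $H''=\kappa_H H$, but the citation route is shorter.
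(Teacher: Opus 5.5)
Your proposal is correct and follows the paper's proof: continuity via Lemma~\ref{lem:dalembert-continuity}, the classical classification of continuous solutions, and reading off $k$ from the Taylor expansions of $\cos(kt)$ and $\cosh(kt)$. Your extra step, deriving $H(0)=1$ from the limit hypothesis via oddness when $H(0)=0$, is a genuine improvement. The paper's continuity lemma tacitly uses $H(0)=1$, while the theorem as stated does not assume it.
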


\begin{proof} 
By Lemma~\ref{lem:dalembert-continuity}, the existence of
\[
\lim_{t\to 0}\frac{2\,(H(t)-1)}{t^2}\in\R
\]
implies that \(H\) is continuous on \(\R\).  
Hence we will apply the classical classification of continuous real-valued
solutions of the d'Alembert equation \eqref{dal} (see, for example, \cite{Aczel}).
\[
H(t)\equiv 1,\quad
H(t)=\cos(kt)\quad\text{or}\quad H(t)=\cosh(kt),\qquad k\in\R.
\]

If \(H(t)\equiv 1\), then \(H(t)-1\equiv 0\), so \(\kappa_H=0\), and the conclusion in (3) holds.

\medskip
If \(H(t)=\cosh(kt)\) for some \(k\in\R\).
Using the Taylor expansion \(\cosh(t)=1+t^2/2+o(t^2)\) as \(t\to 0\), we have 
\[
\cosh(kt)-1=\frac{(kt)^2}{2}+o(t^2),\qquad t\to 0.
\]
Therefore,
\[
\kappa_H
=\lim_{t\to 0}\frac{2(\cosh(kt)-1)}{t^2}
=\lim_{t\to 0}\frac{2\left(\frac{k^2t^2}{2}+o(t^2)\right)}{t^2}
=k^2\ge 0.
\]
If \(\kappa_H>0\), then \(k\neq 0\) and \(|k|=\sqrt{\kappa_H}\), hence
\[
H(t)=\cosh(kt)=\cosh(\sqrt{\kappa_H}\,t),\qquad t\in\R,
\]
since \(\cosh\) is even. If \(k=0\), then \(H\equiv 1\) and \(\kappa_H=0\),
which is already covered by the first case.

\medskip
Similarly, if \(H(t)=\cos(kt)\) for some \(k\in\R\), the Taylor expansion \(\cos(kt)-1=-k^2t^2/2+o(t^2)\) yields \(\kappa_H=-k^2\), and the claim in (2) follows.
\end{proof}

\begin{Corollary}\label{cor:cost-unique}
Let \(F:\Rp\to\R\). Assume \(F\) satisfies the
composition law on \(\mathbb{R}_{>0}\) and has unit log-curvature \(\kappa(F)=1\). Then
\[
F(x)=\frac{x+x^{-1}}{2}-1
\qquad\text{for all }x>0.
\]
\end{Corollary}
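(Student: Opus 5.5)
The plan is to reduce the corollary to Theorem~\ref{thm:cosh-unique} through the logarithmic change of variables. Set $H(t):=F(\e^t)+1$ for $t\in\R$. Since $F$ satisfies the composition law on $\Rp$, Lemma~\ref{lem:equiv-rp-dalembert} shows that $H$ satisfies the d'Alembert equation \eqref{dal} on all of $\R$. This step is a direct citation.

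Next we transfer the calibration. By definition, $H(t)-1=F(\e^t)$, so
\[
\frac{2\,(H(t)-1)}{t^2}=\frac{2\,F(\e^t)}{t^2}\qquad (t\neq 0).
\]
Hence the limit $\kappa_H$ in \eqref{kal} exists and equals $\kappa(F)=1$. In particular $H(t)\to 1$ as $t\to0$, which rules out $H\equiv 0$, i.e.\ $F\equiv -1$. The constant branch therefore cannot occur, consistent with Lemma~\ref{lem:forced-normalization}, and this is implicitly used when Theorem~\ref{thm:cosh-unique} relies on $H(0)=1$.

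Applying Theorem~\ref{thm:cosh-unique} with $\kappa_H=1>0$ gives $H(t)=\cosh(t)$ for every $t\in\R$. For arbitrary $x>0$ we put $t=\ln x$ and obtain
\[
F(x)=H(\ln x)-1=\frac{x+x^{-1}}{2}-1=J(x).
\]

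No step is a genuine obstacle, because all the analytic work (continuity from the calibration via Lemma~\ref{lem:dalembert-continuity}, and the classification of continuous solutions) is already contained in Theorem~\ref{thm:cosh-unique}. The one point needing care is checking that $H(0)=1$ holds before invoking the earlier lemmas, which use it tacitly. This follows either from $\kappa(F)=1$ forcing $F(1)=0$ via Lemma~\ref{lem:forced-normalization}, or directly because $F(\e^t)=O(t^2)$ as $t\to0$.
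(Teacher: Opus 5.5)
Your proof is correct and follows the paper's argument exactly: you set $H(t)=F(\e^t)+1$, invoke Lemma~\ref{lem:equiv-rp-dalembert}, identify $\kappa_H=\kappa(F)=1$, apply Theorem~\ref{thm:cosh-unique}, and substitute $t=\ln x$. Your extra check that $H(0)=1$ is a worthwhile addition, but only the route through Lemma~\ref{lem:forced-normalization} works: the calibration gives $F(\e^t)\to 0$, which excludes $F\equiv -1$, whereas the ``direct'' $O(t^2)$ route fails because a limit as $t\to 0$ says nothing about the value $F(1)$ itself.
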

\begin{proof}
Let \(H(t)=F(\e^t)+1\). By Lemma~\ref{lem:equiv-rp-dalembert}, \(H\) satisfies d'Alembert
equation (\ref{dal}), and
moreover,
\[
\kappa_H=\lim_{t\to 0}\frac{2(H(t)-1)}{t^2}=\lim_{t\to 0}\frac{2F(\e^t)}{t^2}=\kappa(F)=1.
\]
From Theorem~\ref{thm:cosh-unique} then \(H(t)=\cosh(t)\), hence
\(
F(\e^t)=\cosh(t)-1=J(\e^t)
\). For \(x>0\) and \(x=\e^{\ln x}\), we have \(F(x)=J(x)\).
\end{proof}


For completeness we record an alternative route to the classification.
The existence of the curvature limit implies sufficient smoothness to differentiate \eqref{dal} and obtain a linear ODE for \(H\).
These lemmas provide an independent way to the classification, although Theorem~\ref{thm:cosh-unique} as stated follows directly from the classical classification of continuous solutions
of the d'Alembert equation.


\begin{Lemma}\label{lem:dalembert-curvature-ode}
Let \(H:\R\to\R\) satisfy the d'Alembert equation (\ref{dal}). Suppose the following limit exists
\begin{equation*}
\kappa_H:=\lim_{t\to 0}\frac{2\,(H(t)-1)}{t^2}\in\R.
\end{equation*}Then \(H\in C^2(\R)\) and
\[
H''(t)=\kappa_H\,H(t)\qquad\text{for all }t\in\R.
\]
\end{Lemma}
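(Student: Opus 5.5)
The plan is to prove this without the continuous classification, so that it really is an independent route. We regularize $H$ by integration and then bootstrap derivatives from the functional equation. By Lemma~\ref{lem:dalembert-continuity}, $H$ is continuous on $\R$, and $H(0)=1$: either directly, or because $\kappa_H$ exists forces $H(t)\to 1$, which excludes $H\equiv 0$.

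\textbf{Step 1: smoothness.} Since $H$ is continuous with $H(0)=1$, choose $\delta>0$ with
\[
c:=\int_0^\delta H(u)\,du\neq 0 .
\]
Integrating \eqref{dal} in $u$ over $[0,\delta]$ gives
\[
2c\,H(t)=\int_{t-\delta}^{t+\delta}H(s)\,ds .
\]
The right-hand side is $C^1$ because $H$ is continuous, so $H\in C^1$. Feeding this back in shows the right-hand side is $C^2$, hence $H\in C^2$. Iterating gives $H\in C^\infty$, although $C^2$ is all we need.

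\textbf{Step 2: the ODE.} Fix $t$ and differentiate \eqref{dal} twice in $u$:
\[
H''(t+u)+H''(t-u)=2H(t)H''(u).
\]
Setting $u=0$ yields $H''(t)=H''(0)\,H(t)$ for all $t$.

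\textbf{Step 3: identify $H''(0)$.} By Lemma~\ref{lem:dalembert-even}, $H$ is even, so $H'(0)=0$. Taylor's theorem then gives
\[
H(t)-1=\tfrac12 H''(0)t^2+o(t^2),
\]
so
\[
\lim_{t\to 0}\frac{2(H(t)-1)}{t^2}=H''(0).
\]
Hence $H''(0)=\kappa_H$, and therefore $H''=\kappa_H H$.

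The only delicate point is Step~1, namely justifying that the integral averaging produces differentiability. This is standard once continuity is available, and Lemma~\ref{lem:dalembert-continuity} supplies that continuity. Note that the existence of $\kappa_H$ is used twice: first for continuity, and then only to identify $H''(0)$. The resulting ODE, together with $H(0)=1$ and $H'(0)=0$, immediately recovers the three cases of Theorem~\ref{thm:cosh-unique}.
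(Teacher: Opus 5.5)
Your proof is correct, and it takes a different route from the paper's.

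\textbf{The paper's route.} The paper argues in one step: taking $u=h$ in \eqref{dal} and dividing by $h^2$ gives
\[
\frac{H(t+h)-2H(t)+H(t-h)}{h^2}=2H(t)\,\frac{H(h)-1}{h^2}\longrightarrow \kappa_H H(t),
\]
and from this it concludes directly that $H\in C^2(\R)$ with $H''=\kappa_H H$. That limit only gives the symmetric (Schwarz) second derivative of $H$ at each point. Upgrading it to $C^2$ needs two further ingredients:
\begin{enumerate}
\item[(a)] continuity of $H$, from Lemma~\ref{lem:dalembert-continuity};
\item[(b)] Schwarz's lemma that a continuous function with identically vanishing symmetric second derivative is affine, applied to $H-\phi$ where $\phi''=\kappa_H H$.
\end{enumerate}
Without continuity the inference fails: a discontinuous additive function has all symmetric second differences equal to zero but is nowhere differentiable.

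\textbf{Your route.} You obtain regularity from the equation itself. Averaging \eqref{dal} over $u\in[0,\delta]$ and bootstrapping gives $H\in C^\infty$ using only the fundamental theorem of calculus. After that, your Step~2 is exactly the paper's Lemma~\ref{lem:dalembert-to-ode}, and Step~3 combines Lemma~\ref{lem:even-deriv0} with the Peano form of Taylor's theorem. Your treatment of $H(0)=1$, ruling out $H\equiv 0$ because the limit is finite, is also sound.

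\textbf{What each buys.} Once its regularity step is justified, the paper's route reads off the coefficient $\kappa_H$ directly, without differentiating the functional equation or identifying $H''(0)$ afterwards. Yours is self-contained and elementary and gives more smoothness than the statement claims. It also makes clear that the calibration is used only to get continuity and to name the constant $H''(0)$.
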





{  \begin{proof}
From the d'Alembert equation (\ref{dal}),
we obtain
\[
\frac{H(t+h)-2H(t)+H(t-h)}{h^2}
=
2H(t)\frac{H(h)-1}{h^2}.
\]
By assumption,
\[
\frac{2(H(h)-1)}{h^2}\to \kappa_H \qquad (h\to0),
\]
hence the left-hand side converges to $\kappa_H H(t)$.
Therefore $H\in C^2(\R)$ and $H''(t)=\kappa_H H(t)$.
\end{proof}}

\begin{Lemma} \label{lem:dalembert-to-ode}
Let \(H\in C^2(\R)\) satisfy the d'Alembert equation (\ref{dal}). Then for all \(t\in\R\),
\[
H''(t)=H''(0)\,H(t).
\]
\end{Lemma}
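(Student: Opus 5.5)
The plan is to differentiate the d'Alembert equation \eqref{dal} twice with respect to the second variable \(u\) and then set \(u=0\). Since \(H\in C^2(\R)\), both sides of
\[
H(t+u)+H(t-u)=2\,H(t)\,H(u)
\]
are \(C^2\) functions of \(u\) for each fixed \(t\). Differentiating once in \(u\) gives
\[
H'(t+u)-H'(t-u)=2H(t)H'(u).
\]
Differentiating a second time gives
\[
H''(t+u)+H''(t-u)=2H(t)H''(u).
\]
Setting \(u=0\) in the second identity yields \(2H''(t)=2H(t)H''(0)\), which is the claim. The chain rule is all that is needed here; no interchange of limits is involved, because \(H\) is assumed to be \(C^2\) on all of \(\R\).

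There is one point to check. The identity in the statement does not rely on \(H(0)=1\) at all, and it holds trivially when \(H\equiv 0\), since then \(H''\equiv 0\). So no case split on the value of \(H(0)\) is required.

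As a consistency check, \(H\) is even by Lemma~\ref{lem:dalembert-even} (or directly, when \(H(0)=1\)). Hence \(H'(0)=0\), which also follows from the first-derivative identity with \(t=u=0\). Combining this with \(H(0)=1\), the ODE \(H''=H''(0)H\) has the unique solution \(\cosh(\sqrt{c}\,t)\), \(\cos(\sqrt{-c}\,t)\), or \(1\), where \(c=H''(0)\). This is the alternative route to the classification alluded to above, although it is not needed for the lemma itself.

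I do not expect a real obstacle. The only step that needs care is justifying the differentiation in \(u\), and this is immediate from \(H\in C^2(\R)\) via the chain rule applied to \(u\mapsto H(t\pm u)\).
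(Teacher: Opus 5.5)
Your proposal is correct and follows the paper's proof: differentiate \eqref{dal} twice in \(u\) for fixed \(t\), then set \(u=0\) to get \(2H''(t)=2H(t)H''(0)\). Your side remarks are accurate but not needed for the lemma: the step indeed does not use \(H(0)=1\), and evenness gives \(H'(0)=0\).
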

{ \begin{proof}

Taking the second derivative of (\ref{dal}) with respect to \(u\) for fixed $t$, we get
\[
H''(t+u)+H''(t-u)-2H(t)H''(u)=0.
\]
Therefore, evaluating at $u=0$, we have
\[
2H''(t)-2H(t)H''(0)=0,
\]
so \(H''(t)=H''(0)H(t)\) for all \(t\).
\end{proof}}

\begin{Lemma}\label{lem:even-deriv0}
Let \(H\in C^1(\R)\) be even. Then \(H'(0)=0\).
\end{Lemma}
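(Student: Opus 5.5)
The idea is to differentiate the evenness identity \(H(-t)=H(t)\) at \(t=0\) and combine it with the chain rule. Alternatively, one can read off the value of \(H'(0)\) directly from the definition of the derivative using two symmetric difference quotients. I would use the second approach, since it needs only differentiability at \(0\) and not full \(C^1\) regularity.

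\emph{Step 1.} Since \(H\in C^1(\R)\), the derivative at \(0\) exists and equals both one-sided limits:
\[
H'(0)=\lim_{h\to 0}\frac{H(h)-H(0)}{h}=\lim_{h\to0}\frac{H(-h)-H(0)}{-h}.
\]

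\emph{Step 2.} Use evenness, \(H(-h)=H(h)\), to rewrite the second quotient as
\[
\frac{H(-h)-H(0)}{-h}=-\frac{H(h)-H(0)}{h}.
\]
Passing to the limit gives \(H'(0)=-H'(0)\), hence \(H'(0)=0\).

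As a cross-check, one can instead differentiate \(H(-t)=H(t)\) for all \(t\) using the chain rule. This gives \(-H'(-t)=H'(t)\), so \(H'\) is odd, and evaluating at \(t=0\) again yields \(2H'(0)=0\).

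There is no real obstacle here. The only point to note is that evenness must hold on a neighbourhood of \(0\) so that the substitution \(h\mapsto -h\) is legitimate, and this is immediate from the hypothesis that \(H\) is even on all of \(\R\). In the paper's context, the evenness of \(H\) will be supplied by Lemma~\ref{lem:dalembert-even} (or by Lemma~\ref{lem:recip-even}), so the present lemma applies to any \(C^1\) solution of \eqref{dal}. Combined with Lemmas~\ref{lem:dalembert-curvature-ode} and \ref{lem:dalembert-to-ode}, it gives the initial conditions \(H(0)=1\), \(H'(0)=0\) that single out \(\cosh(\sqrt{\kappa_H}\,t)\) or \(\cos(\sqrt{-\kappa_H}\,t)\) as the unique solution of the ODE.
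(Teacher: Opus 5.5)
Your proof is correct and is essentially the paper's argument. The paper simply differentiates $H(t)=H(-t)$ at $t=0$, which is your chain-rule cross-check, and your difference-quotient version is the same computation written out from the definition; the chain-rule route also uses only differentiability of $H$ at $0$, so neither version needs the full $C^1$ hypothesis.
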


{  \begin{proof} This follows immediately from $H(t)=H(-t)$ by differentiating at $t=0$.\end{proof}}



{ \begin{Lemma}\label{lem:ode-zero}
Let $\kappa\in\R\setminus\{0\}$ and let $f\in C^2(\R)$ satisfy
\(
f''(t)=\kappa f(t)\, \text{for all }t\in\R,
\)
with $f(0)=0$ and $f'(0)=0$. Then \(f(t)=0\) for all \(t\).
\end{Lemma}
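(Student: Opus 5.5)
The plan is to prove Lemma~\ref{lem:ode-zero} by an energy argument, which avoids the closed-form solution formula and works for either sign of \(\kappa\). The natural quantity is
\[
E(t):=f'(t)^2-\kappa f(t)^2 .
\]
Since \(f\in C^2(\R)\), the function \(E\) is \(C^1\). Using \(f''=\kappa f\) we get
\[
E'(t)=2f'(t)\bigl(f''(t)-\kappa f(t)\bigr)=0 .
\]
Hence \(E\) is constant. The initial conditions \(f(0)=f'(0)=0\) give \(E\equiv 0\), that is, \(f'(t)^2=\kappa f(t)^2\) for all \(t\in\R\).

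If \(\kappa<0\), then \(f'^2-\kappa f^2=f'^2+|\kappa| f^2\) is a sum of nonnegative terms. Since it vanishes identically, both terms vanish, and \(f\equiv 0\) follows at once.

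If \(\kappa>0\), the identity \(f'^2=\kappa f^2\) is not directly conclusive, so I would use a second Gr\"onwall-type quantity, \(Q(t):=f(t)^2+f'(t)^2\). It satisfies
\[
Q'=2ff'+2f'f''=2(1+\kappa)ff'\le (1+\kappa)Q .
\]
It likewise satisfies \(Q'\ge -(1+\kappa)Q\). Therefore \(\bigl(e^{-(1+\kappa)t}Q\bigr)'\le 0\) and \(\bigl(e^{(1+\kappa)t}Q\bigr)'\ge 0\). Since \(Q\ge 0\) and \(Q(0)=0\), these two monotonicity facts force \(Q(t)=0\) for \(t\ge 0\) and for \(t\le 0\) respectively. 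This argument covers every \(\kappa\), so it could in fact replace the sign split entirely.

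An equivalent route is to reduce the ODE to the first-order system \((f,f')'=A(f,f')\), where \(A\) is constant, and invoke uniqueness for linear ODEs. I prefer the self-contained estimate. The only real care needed is to handle \(t<0\) as well as \(t>0\), which the two-sided differential inequality does. Beyond that there is no genuine obstacle.
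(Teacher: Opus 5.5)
Your proof is correct, but it takes a different route from the paper. The paper disposes of the lemma in one line. It cites uniqueness for the initial value problem of a linear ODE with continuous (here constant) coefficients: $f\equiv 0$ solves $f''=\kappa f$ with data $(0,0)$, so it is the only solution.

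You instead prove the needed uniqueness by hand. The conserved quantity $E=f'^2-\kappa f^2$ settles $\kappa<0$ immediately. For $\kappa>0$, you use $Q=f^2+f'^2$, the bound $|Q'|=2|1+\kappa|\,|ff'|\le|1+\kappa|\,Q$, and the integrating factors $e^{\mp(1+\kappa)t}$ to force $Q\equiv0$ on both half-lines. This is essentially the Gr\"onwall argument behind the general uniqueness theorem, specialised to this constant-coefficient system. Your version is self-contained and needs no external citation, while the paper's is shorter and defers everything to a standard theorem. Neither argument actually uses $\kappa\neq 0$.

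One small correction to your side remark that the $Q$-argument could replace the sign split entirely. The inequalities $Q'\le(1+\kappa)Q$ and $Q'\ge-(1+\kappa)Q$ follow from $2|ff'|\le Q$ only when $1+\kappa\ge0$. For $\kappa<-1$ you must replace $1+\kappa$ by $|1+\kappa|$, or by $1+|\kappa|$, in both the inequalities and the integrating factors. This does not affect your proof as written, since you treat $\kappa<0$ separately via $E$.
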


\begin{proof}
The equation $f''(t)=\kappa f(t)$ is a linear ODE with continuous coefficients, hence the
initial value problem with data $(f(0),f'(0))=(0,0)$ has a unique solution.
Since $f\equiv 0$ is a solution, uniqueness implies $f\equiv 0$.
\end{proof}}



In terms of logarithmic coordinates \(H(t)=F(e^t)+1\), the rigidity problem can be summarized as follows:
once the d’Alembert equation \eqref{dal} holds and the quadratic calibration \eqref{kal} exists, the solution is forced into one of the standard cosine/hyperbolic-cosine families, with the parameter fixed by \(\kappa_H\).
In our case, the normalization \(F(1)=0\) forces \(H(0)=1\), and the unit calibration in Theorem~\ref{thm:main} forces \(\kappa_H=1\), hence \(H(t)=\cosh(t)\) and \(F=J\).

\smallskip  Theorem~\ref{thm:main} is intentionally minimal: each assumption plays a distinct role. The following propositions show what fails when individual assumptions are removed.

\begin{Proposition}\label{prop:reciprocity-forced}
Let \(F:\Rp\to\R\) satisfy \eqref{comp} and \(F(1)=0\). Then \(F(x)=F(x^{-1})\) for all
\(x>0\).
\end{Proposition}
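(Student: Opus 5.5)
The plan is to prove reciprocity directly from the composition law by specializing $x=1$, which uses only the normalization $F(1)=0$. Alternatively, one can pass to logarithmic coordinates and invoke Lemma~\ref{lem:dalembert-even}; both routes are essentially one line, and I would present the direct one with the second as a cross-check.

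Fix $y>0$ and set $x=1$ in \eqref{comp}. This gives
\[
F(y)+F\!\left(y^{-1}\right)=2F(1)F(y)+2F(1)+2F(y).
\]
Substituting $F(1)=0$, the right-hand side reduces to $2F(y)$, hence $F(y^{-1})=F(y)$. Since $y>0$ was arbitrary, this is exactly the reciprocity in Definition~\ref{def:recip-norm}.

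For the logarithmic route, define $H(t)=F(\e^t)+1$. By Lemma~\ref{lem:equiv-rp-dalembert}, $H$ satisfies \eqref{dal}, and $H(0)=F(1)+1=1$. The proof of Lemma~\ref{lem:dalembert-even} uses exactly $H(0)=1$: it sets $t=0$ in \eqref{dal}, which gives $H(u)+H(-u)=2H(u)$. Hence $H$ is even, that is, $F(\e^{-u})=F(\e^{u})$ for all $u$. Writing $x=\e^u$ then yields $F(x^{-1})=F(x)$.

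There is no real obstacle here. The only point to watch is that the hypothesis $F(1)=0$ is genuinely needed to kill the term $2F(1)(F(y)+1)$. By Lemma~\ref{lem:forced-normalization}, the hypothesis merely excludes the constant solution $F\equiv-1$, and that solution is trivially reciprocal anyway.
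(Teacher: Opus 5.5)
Your proof is correct and is essentially the paper's own argument: setting $x=1$ in \eqref{comp} and using $F(1)=0$ reduces the right-hand side to $2F(y)$, which gives $F(y^{-1})=F(y)$. Your logarithmic cross-check via Lemma~\ref{lem:dalembert-even} is also accurate, as is your closing remark that the excluded solution $F\equiv -1$ is trivially reciprocal.
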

\begin{proof}
Plug \(x=1\) into Definition~\ref{def:rp-law} to obtain
\[
F(y)+F\Big(\dfrac 1 y\Big)=2F(1)F(y)+2F(1)+2F(y)=2F(y),
\]
hence \(F\Big(\dfrac 1 y\Big)=F(y)\) for all \(y>0\).
\end{proof}

\begin{Proposition}\label{prop:family-via-W}
Assume there is a function \(W:\Rp\to(0,\infty)\) satisfying 
\[
W(xy)=W(x)W(y)\qquad x,y>0.
\]
Define \(F_W:\Rp\to\R\) by
\[
F_W(x):=\frac{W(x)+W(x)^{-1}}{2}-1.
\]
Then \(F_W(1)=0\), and \(F_W\) satisfies the \(\Rp\) composition law
(Definition~\ref{def:rp-law}). If, in addition, \(W\) is continuous, then there exists \(\lambda\in\R\)
with \(W(x)=x^\lambda\) for all \(x>0\), hence
\[
F_W(x)=\cosh(\lambda\ln x)-1.
\]
\end{Proposition}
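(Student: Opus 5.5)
The plan is to verify the three claims of Proposition~\ref{prop:family-via-W} in turn: the normalization, the composition law, and the form of $F_W$ when $W$ is continuous. Each step reduces to known facts already recorded in Section~\ref{sec:prelim}.

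\emph{Normalization.} From $W(1)=W(1)^2$ and $W(1)>0$ we get $W(1)=1$. Hence $F_W(1)=\tfrac12(1+1)-1=0$.

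\emph{Composition law.} We pass to the shifted function $H_W(x):=F_W(x)+1=\tfrac12\bigl(W(x)+W(x)^{-1}\bigr)$. By the computation in Lemma~\ref{lem:equiv-rp-dalembert}, the composition law~\eqref{comp} for $F_W$ is equivalent to the multiplicative d'Alembert identity
\[
H_W(xy)+H_W(x/y)=2H_W(x)H_W(y).
\]
To check this identity, set $a=W(x)$ and $b=W(y)$. Multiplicativity gives $W(xy)=ab$, and from $W(x/y)W(y)=W(x)$ we get $W(x/y)=a/b$. The left side is then
\[
\tfrac12\bigl(ab+(ab)^{-1}+ab^{-1}+a^{-1}b\bigr)=\tfrac12(a+a^{-1})(b+b^{-1}),
\]
which equals $2H_W(x)H_W(y)$. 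This is just the multiplicative form of the representation in Theorem~\ref{thm:kannappan-general} with $h=W$.

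\emph{Continuous case.} Define $\phi(t):=\ln W(e^t)$, which is well defined because $W>0$. Then $\phi(t+u)=\phi(t)+\phi(u)$, so $\phi$ satisfies Cauchy's additive equation. If $W$ is continuous, then $\phi$ is continuous, and the classical Cauchy theorem gives $\phi(t)=\lambda t$ with $\lambda=\phi(1)$. Substituting back, $W(x)=e^{\lambda\ln x}=x^\lambda$. It follows that
\[
F_W(x)=\tfrac12\bigl(e^{\lambda\ln x}+e^{-\lambda\ln x}\bigr)-1=\cosh(\lambda\ln x)-1.
\]

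The only step that is not a direct computation is the appeal to the continuous solution of Cauchy's equation, which is standard. Everything else is algebra.
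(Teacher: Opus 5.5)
Your proposal is correct and follows essentially the same route as the paper. It derives $W(1)=1$ from $W(1)=W(1)^2$, verifies the shifted identity for $F_W+1$ by direct algebra, and applies Cauchy's equation to $\ln W(e^t)$ in the continuous case; the only difference is that you verify the identity multiplicatively on $\mathbb{R}_{>0}$, whereas the paper passes to $H(t)=F_W(e^t)+1$ after noting $W(x^{-1})=W(x)^{-1}$, which is purely cosmetic.
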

\begin{proof}
First, \(W(1)=W(1\cdot 1)=W(1)^2\) and \(W(1)>0\) force \(W(1)=1\), so \(F_W(1)=\tfrac12(1+1)-1=0\).
Also,
\[
W(x)W(x^{-1})=W(xx^{-1})=W(1)=1,
\]
so \(W(x^{-1})=W(x)^{-1}\) and hence \(F_W(x)=F_W(x^{-1})\).

Define \(H(t):=F_W(\e^t)+1=\tfrac12(W(\e^t)+W(\e^t)^{-1})\). Let \(t,u\in\R\). Then
\begin{align*}
H(t+u)+H(t-u)
&=\frac{W(\e^{t+u})+W(\e^{t+u})^{-1}+W(\e^{t-u})+W(\e^{t-u})^{-1}}{2}\\
&=\frac{(W(\e^t)+W(\e^t)^{-1})(W(\e^u)+W(\e^u)^{-1})}{2}
=2H(t)H(u),
\end{align*}
so \(H\) satisfies d'Alembert. By Lemma~\ref{lem:equiv-rp-dalembert}, \(F_W\) satisfies the \(\Rp\)
composition law.

Finally, if \(W\) is continuous, define \(w(t):=\ln W(\e^t)\). Then \(w:\R\to\R\) is additive:
using \(W(\e^{t+u})=W(\e^t)W(\e^u)\), we get \(w(t+u)=w(t)+w(u)\). Continuity of \(W\) implies
continuity of \(w\), hence \(w(t)=\lambda t\) for some \(\lambda\in\R\). Therefore
\(
W(\e^t)=\e^{\lambda t}
\),
i.e. \(W(x)=x^\lambda\) for \(x>0\), and
\(
F_W(\e^t)=\cosh(\lambda t)-1
\),
equivalently \(F_W(x)=\cosh(\lambda\ln x)-1\).
\end{proof}

\begin{Corollary}\label{cor:no-calibration-family}
For any \(\lambda>0\), define
\[
F_\lambda(x):=\cosh(\lambda\ln x)-1 \qquad (x>0).
\]
Then \(F_\lambda\) is continuous on \(\Rp\), satisfies \(F_\lambda(1)=0\),
satisfies the composition law \eqref{comp}, and has \(\kappa(F_\lambda)=\lambda^2\).
In particular, the unit calibration \(\kappa(F)=1\) fixes \(\lambda=1\).
\end{Corollary}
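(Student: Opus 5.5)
The plan is to obtain every claim of Corollary~\ref{cor:no-calibration-family} from Proposition~\ref{prop:family-via-W} with the choice \(W(x):=x^\lambda\). Then compute the log-curvature directly.

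First, I fix \(\lambda>0\) and set \(W(x)=x^\lambda\) for \(x>0\). This map is continuous and takes values in \((0,\infty)\). It is multiplicative, since \((xy)^\lambda=x^\lambda y^\lambda\). Because \(W(x)^{-1}=x^{-\lambda}\), Proposition~\ref{prop:family-via-W} gives
\[
F_W(x)=\tfrac12\bigl(x^\lambda+x^{-\lambda}\bigr)-1=\cosh(\lambda\ln x)-1=F_\lambda(x).
\]
The same proposition then yields \(F_\lambda(1)=0\) and the composition law \eqref{comp}. Continuity of \(F_\lambda\) on \(\Rp\) is immediate, because it is a composition of the continuous maps \(\ln\), multiplication by \(\lambda\), and \(\cosh\).

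Second, I compute the curvature. In logarithmic coordinates, \(F_\lambda(\e^t)=\cosh(\lambda t)-1\). The expansion \(\cosh(s)=1+s^2/2+o(s^2)\), already used in the proof of Theorem~\ref{thm:cosh-unique}, gives
\[
\frac{2F_\lambda(\e^t)}{t^2}=\frac{\lambda^2t^2+o(t^2)}{t^2}\to\lambda^2 .
\]
So \(\kappa(F_\lambda)=\lambda^2\).

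Finally, suppose \(\kappa(F_\lambda)=1\). Then \(\lambda^2=1\), and since \(\lambda>0\) we get \(\lambda=1\). This recovers \(F_1=J\), consistent with Corollary~\ref{cor:cost-unique}.

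I do not expect a real obstacle here, since every step is a substitution into results already proved. The only point that needs care is confirming that \(x^\lambda\) satisfies the hypotheses of Proposition~\ref{prop:family-via-W}: it must be positive and multiplicative, and both properties are clear.
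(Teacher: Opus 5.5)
Your proposal is correct and matches the paper's proof: take \(W(x)=x^\lambda\) in Proposition~\ref{prop:family-via-W} to get the normalization and the composition law, then read off \(\kappa(F_\lambda)=\lambda^2\) from \(\cosh(\lambda t)-1=\lambda^2t^2/2+o(t^2)\). You also spell out the continuity of \(F_\lambda\) and the step from \(\lambda^2=1\), \(\lambda>0\) to \(\lambda=1\), which the paper leaves implicit.
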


\begin{proof}
Take \(W(x)=x^\lambda\) in the previous proposition, so \(F_\lambda=F_W\) and \eqref{comp} holds.
Moreover, since \(\cosh(\lambda t)-1=\frac{\lambda^2 t^2}{2}+o(t^2)\) as \(t\to 0\), we get
\[
\kappa(F_\lambda)
=\lim_{t\to 0}\frac{2(\cosh(\lambda t)-1)}{t^2}
=\lambda^2.
\]
\end{proof}

The following proposition shows that normalization and unit calibration alone 
do not imply the composition law on \(\mathbb{R}_{>0}\).

\begin{Proposition}\label{prop:no-law-counterexample}
Let us define \(F(x):=\tfrac12(\ln x)^2\) on \(\Rp\). Then \(F\) is
continuous, satisfies \(F(1)=0\), and has \(\kappa(F)=1\), but it does
\emph{not} satisfy the  composition law on \(\mathbb{R}_{>0}\).
\end{Proposition}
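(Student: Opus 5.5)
The plan is to verify the three positive properties directly and then exhibit one explicit pair $(x,y)$ at which the composition law \eqref{comp} fails. Continuity is immediate, since $\ln$ is continuous on $\Rp$. Normalization holds because $F(1)=\tfrac12(\ln 1)^2=0$. For the calibration we compute $F(\e^t)=\tfrac12 t^2$ exactly, so $2F(\e^t)/t^2=1$ for every $t\neq 0$, and therefore $\kappa(F)=1$ (Definition~\ref{def:calibration-early}).

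For the failure of the composition law we pass to logarithmic coordinates via Lemma~\ref{lem:equiv-rp-dalembert}. There $G(t)=F(\e^t)=\tfrac12t^2$ and $H(t)=\tfrac12 t^2+1$. The left-hand side is
\[
G(t+u)+G(t-u)=\tfrac12\bigl((t+u)^2+(t-u)^2\bigr)=t^2+u^2 ,
\]
by the parallelogram identity. The right-hand side is
\[
2G(t)G(u)+2G(t)+2G(u)=\tfrac12 t^2u^2+t^2+u^2 .
\]
The two sides therefore differ by exactly $\tfrac12 t^2u^2$, which is nonzero whenever $t,u\neq 0$.

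To give a concrete witness we take $x=y=\e$, that is $t=u=1$. The left-hand side is $F(\e^2)+F(1)=2$, and the right-hand side is $2\cdot\tfrac14+1+1=\tfrac52$. So \eqref{comp} fails at this pair. Equivalently, $H$ violates \eqref{dal}: $H(2)+H(0)=4$, while $2H(1)^2=\tfrac92$.

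There is no real obstacle here; every step is a direct computation. The only point requiring care is to present the counterexample at a specific pair, rather than merely noting that the quadratic term $\tfrac12 t^2u^2$ is generally nonzero. One could also remark that $\tfrac12t^2+1$ is not of the form $\cosh(kt)$ or $\cos(kt)$, which by Theorem~\ref{thm:cosh-unique} already rules it out. The explicit evaluation is cleaner, though, and does not depend on the classification.
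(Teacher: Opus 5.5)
Your proof is correct and takes essentially the same route as the paper. You verify continuity, $F(1)=0$ and $\kappa(F)=1$ directly, then show in logarithmic coordinates that the two sides of the composition law differ by exactly $\tfrac12 t^2u^2$. Your explicit witness $x=y=\e$ (giving $2$ versus $\tfrac52$) is a correct, harmless addition to the paper's observation that this difference is nonzero whenever $tu\neq 0$.
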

\begin{proof}
Continuity and \(F(1)=0\) are immediate. Moreover,
\(F(\e^t)=t^2/2\), so
\[
\kappa(F)=\lim_{t\to 0}\frac{2(t^2/2)}{t^2}=1.
\]
Define \(H(t):=F(\e^t)+1=1+t^2/2\). Then for \(t,u\in\R\),
\[
H(t+u)+H(t-u)=2+\frac{(t+u)^2+(t-u)^2}{2}=2+t^2+u^2,
\]
while
\[
2H(t)H(u)
=2\left(1+\frac{t^2}{2}\right)\left(1+\frac{u^2}{2}\right)
=2+t^2+u^2+\frac{t^2u^2}{2}.
\]
The last two expressions differ when \(tu\neq 0\), hence \(H\) does not satisfy
d'Alembert's functional equation. By Lemma~\ref{lem:equiv-rp-dalembert},
\(F\) does not satisfy the composition law on \(\mathbb{R}_{>0}\).

\end{proof}

The following proposition shows that, without a regularity assumption,
the composition law on \(\mathbb{R}_{>0}\) (\ref{comp}) admits pathological solutions.

\begin{Proposition}\label{prop:pathological}
There exist functions \(F:\mathbb{R}_{>0}\to\mathbb{R}\) satisfying \(F(1)=0\) and the
composition law on \(\mathbb{R}_{>0}\)~\eqref{comp} that are not measurable (and hence not continuous).
\end{Proposition}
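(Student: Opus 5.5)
The plan is to build the example through Proposition~\ref{prop:family-via-W}, using a discontinuous additive function. Take a Hamel basis of $\R$ over $\mathbb{Q}$ and use it to obtain an additive map $a:\R\to\R$, $a(t+u)=a(t)+a(u)$, that is not of the form $a(t)=\lambda t$. Then define $W(x):=\e^{a(\ln x)}$ for $x>0$. This $W$ takes values in $(0,\infty)$ and is multiplicative, since $W(xy)=\e^{a(\ln x)+a(\ln y)}=W(x)W(y)$. Proposition~\ref{prop:family-via-W} then gives at once that $F:=F_W$ satisfies $F(1)=0$ and the composition law \eqref{comp}. In logarithmic coordinates this reads $F(\e^t)=\cosh(a(t))-1$.

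It remains to show that $F$ is not measurable. I would argue by contradiction: suppose $F$ is Lebesgue measurable on $\Rp$. Then $G(t)=F(\e^t)$ is measurable on $\R$, because $t\mapsto \e^t$ is a diffeomorphism and so preserves null sets. Hence $H=\cosh\circ a$ is measurable. The natural target is the classical theorem that a measurable additive function is linear, so I want to recover $a$, or at least $|a|$, measurably from $H$.

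Since $\cosh$ is a bijection from $[0,\infty)$ onto $[1,\infty)$ with continuous inverse $\operatorname{arcosh}$, we get that $|a|=\operatorname{arcosh}\circ H$ is measurable. This is the step I expect to be the main obstacle, because the sign of $a$ is lost. I see two ways around it.

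\begin{itemize}
\item[(a)] Note that $a(t)^2=|a(t)|^2$ is measurable, and $q(t):=a(t)^2$ is a quadratic functional, satisfying $q(t+u)+q(t-u)=2q(t)+2q(u)$. Then invoke the classical fact that measurable solutions of the quadratic functional equation are of the form $ct^2$. This would give $a(t)^2=ct^2$, so $|a(t)|=\sqrt{c}\,|t|$ is bounded on $[0,1]$.
\item[(b)] More elementarily, use the theorem of Ostrowski type: an additive function that is bounded above (or bounded in absolute value) on a set of positive measure is linear. Measurability of $|a|$ gives a set $E$ of positive measure with $|a|\le M$ on $E$, by taking $E=\{|a|\le M\}$ for $M$ large, since the union over $M$ is all of $\R$. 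Then $a$ is bounded on $E$, hence continuous and linear.
\end{itemize}

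Route (b) is cleaner, and I would use it. It contradicts the choice of $a$ as nonlinear, so $F$ is not measurable. Finally, since continuous functions are measurable, $F$ is also not continuous.
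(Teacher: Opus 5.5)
Your proposal is correct and takes essentially the paper's approach: your $F$, with $F(\e^t)=\cosh(a(t))-1$ for a discontinuous additive $a$, is exactly the paper's example, and your route (b) is the paper's non-measurability argument ($|a|=\operatorname{arcosh}\circ H$ is measurable, some sublevel set $\{|a|\le M\}$ has positive measure, and an additive function bounded there is linear). The differences are minor: you get the composition law from Proposition~\ref{prop:family-via-W} with $W(x)=\e^{a(\ln x)}$ rather than from the $\cosh$ addition identity directly, and you justify the measurability of $t\mapsto F(\e^t)$ more carefully, via $\exp$ being a diffeomorphism, than the paper's appeal to ``composition of measurable functions''.
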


\begin{proof}
It is a  consequence of the existence of a Hamel basis of \(\mathbb{R}\) over \(\mathbb{Q}\)
that there exists an additive function \(a:\mathbb{R}\to\mathbb{R}\),
\(a(t+u)=a(t)+a(u)\), which is not measurable (and hence not continuous).

Let us define
\[
H(t):=\cosh(a(t)), \qquad t\in\mathbb{R}.
\]

We claim that \(H\) is not measurable. Suppose for contradiction that \(H\) is measurable.
Since \(\operatorname{arcosh}:[1,\infty)\to[0,\infty)\) is continuous and \(\cosh(a(t))\ge 1\), the composition
\[
|a(t)|=\operatorname{arcosh}(H(t))
\]
is measurable. For each \(n\in\mathbb{N}\), define the measurable sets
\[
E_n:=\{t\in\R:\ |a(t)|\le n\}.
\]
Then \(\bigcup_{n=1}^\infty E_n=\R\), so at least one \(E_n\) has positive Lebesgue measure.
On that set, \(a\) is bounded. It is a classical fact that an additive function bounded on a set of positive measure must be linear, hence continuous, contradicting the choice of \(a\).
Therefore \(H\) is not measurable.

Using the additivity of \(a\) and the identity
\(\cosh(x+y)+\cosh(x-y)=2\cosh(x)\cosh(y)\), we compute for all \(t,u\in\mathbb{R}\),
\begin{align*}
H(t+u)+H(t-u)
&=\cosh(a(t)+a(u))+\cosh(a(t)-a(u))\\
&=2\cosh(a(t))\cosh(a(u))
=2H(t)H(u).
\end{align*}
Thus \(H\) satisfies d'Alembert's functional equation and \(H(0)=\cosh(a(0))=1\).

Finally, we define
\[
F(x):=H(\ln x)-1, \qquad x>0.
\]
Then \(F(1)=0\), and by Lemma~\ref{lem:equiv-rp-dalembert}, \(F\) satisfies the composition law
on \(\mathbb{R}_{>0}\)~\eqref{comp}.
If \(F\) were measurable, then \(H(t)=F(e^t)+1\) would be measurable as a composition
of measurable functions, contradicting the non-measurability of \(H\).
Hence \(F\) is not measurable.
\end{proof}

{ \section{Consistency estimate under bounded d’Alembert defect}\label{sec:stability}}

In this section we give a local consistency estimate.
If the d’Alembert equation holds with a uniform defect on a compact set and the function is sufficiently smooth, then the solution is close to the hyperbolic cosine case.

\begin{Definition}\label{def:defect}
For \(H:\R\to\R\), we define the {\rm d'Alembert defect}
\begin{equation}\label{defect}
\Delta_H(t,u):=H(t+u)+H(t-u)-2H(t)H(u).
\end{equation}
\end{Definition}

\begin{Theorem}\label{thm:stability}
Fix \(T>0\). Let \(H\in C^3([-T,T])\) be even with \(H(0)=1\), and set \(a:=H''(0)\).
Assume \(a>0\).  Let
\[
\varepsilon:=\sup_{|t|\le T,\ |u|\le T}\ |\Delta_H(t,u)|,\qquad
B:=\sup_{|t|\le T}\ |H(t)|,\qquad
K:=\sup_{|t|\le T}\ |H^{(3)}(t)|.
\]
Then for every \(h\) with \(0<h\le T\) and every \(t\) with \(|t|\le T-h\),
\[
\bigl|H(t)-\cosh(\sqrt{a}\,t)\bigr|
\le
\frac{\delta(h)}{a}\,\bigl(\cosh(\sqrt{a}\,|t|)-1\bigr),
\]
where
\[
\delta(h):=\frac{\varepsilon}{h^2}+\frac{(1+B)K}{3}\,h.
\]
\end{Theorem}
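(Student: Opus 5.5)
The plan is to turn the bounded defect into an approximate ODE $H''\approx aH$ on $[-(T-h),T-h]$, in the spirit of Lemma~\ref{lem:dalembert-curvature-ode}, and then compare with $\cosh(\sqrt a\,t)$ by variation of constants.

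\textbf{Step 1 (approximate ODE).} Fix $0<h\le T$ and $|t|\le T-h$, so that $t\pm h\in[-T,T]$. By Definition~\ref{def:defect},
\[
H(t+h)+H(t-h)-2H(t)=2H(t)\bigl(H(h)-1\bigr)+\Delta_H(t,h).
\]
Expand the left-hand side by Taylor's theorem with Lagrange remainder to third order around $t$. The first-order terms cancel, so
\[
H(t+h)+H(t-h)-2H(t)=h^2H''(t)+R_1,\qquad |R_1|\le 2\cdot\frac{Kh^3}{6}=\frac{Kh^3}{3}.
\]
For the factor $H(h)-1$, use $H(0)=1$ and $H'(0)=0$ (by evenness, Lemma~\ref{lem:even-deriv0}) to get
\[
H(h)-1=\frac{a h^2}{2}+R_2,\qquad |R_2|\le\frac{Kh^3}{6}.
\]
Substituting and dividing by $h^2$ gives
\[
|H''(t)-aH(t)|\le \frac{\varepsilon}{h^2}+\frac{Kh}{3}+\frac{2B}{h^2}\cdot\frac{Kh^3}{6}=\frac{\varepsilon}{h^2}+\frac{(1+B)K}{3}\,h=\delta(h).
\]
This reproduces the constant in the statement exactly, which confirms that this is the intended route.

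\textbf{Step 2 (comparison).} Let $e(t):=H(t)-\cosh(\sqrt a\,t)$ on $[-(T-h),T-h]$. Then $e\in C^2$, $e(0)=0$, and $e'(0)=0$, since both functions are even and $C^1$. Also $e''-ae=r$ with $r:=H''-aH$, so $|r|\le\delta(h)$ there. The unique solution of this initial value problem (cf.\ Lemma~\ref{lem:ode-zero} for uniqueness) is
\[
e(t)=\int_0^t\frac{\sinh\bigl(\sqrt a\,(t-s)\bigr)}{\sqrt a}\,r(s)\,ds .
\]
One checks this directly by differentiating twice. Since $\sinh$ keeps a fixed sign on the integration range, we obtain
\[
|e(t)|\le\delta(h)\int_0^{|t|}\frac{\sinh(\sqrt a\,\sigma)}{\sqrt a}\,d\sigma=\frac{\delta(h)}{a}\bigl(\cosh(\sqrt a\,|t|)-1\bigr),
\]
which is the claim. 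The case $t<0$ follows from the same computation, or by evenness of $e$.

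\textbf{Main obstacle.} The delicate point is bookkeeping rather than any conceptual difficulty. First, every Taylor expansion must stay inside $[-T,T]$; this is exactly why the estimate is restricted to $|t|\le T-h$, and why the comparison in Step 2 only uses $s$ between $0$ and $t$. Second, the remainder constants must come out as $K/3$ and $BK/3$, which requires the Lagrange form and the fact that $H'(0)=0$ removes the linear term in $H(h)-1$.
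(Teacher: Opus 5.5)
Your proposal is correct and follows the paper's proof essentially step for step. You combine the Taylor bounds $Kh^3/3$ and $Kh^3/6$ with the defect identity to get $|H''(t)-aH(t)|\le\delta(h)$ for $|t|\le T-h$, then apply the variation-of-constants formula to $e=H-\cosh(\sqrt a\,t)$ with $e(0)=e'(0)=0$, and use evenness for $t<0$. The only difference is that you use the Lagrange form of the Taylor remainder where the paper uses the integral form, which is immaterial.
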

\begin{proof}
Fix \(0<h\le T\) and \(|t|\le T-h\).

Using the integral form of Taylor's theorem, we have
\[
H(t+h)=H(t)+hH'(t)+\frac{h^2}{2}H''(t)+\int_0^h\frac{(h-s)^2}{2}\,H^{(3)}(t+s)\,ds,
\]
\[
H(t-h)=H(t)-hH'(t)+\frac{h^2}{2}H''(t)-\int_0^h\frac{(h-s)^2}{2}\,H^{(3)}(t-s)\,ds.
\]
Adding the last two equations and bounding \(|H^{(3)}|\le K\) yields \begin{equation}\label{eq:taylor1}
\bigl|H(t+h)+H(t-h)-2H(t)-h^2H''(t)\bigr|\le \frac{K}{3}\,h^3.
\end{equation}
Since \(H\) is even, \(H'(0)=0\), and the integral form at \(0\) gives
\begin{equation}\label{eq:taylor2}
\bigl|H(h)-1-\frac{a}{2}h^2\bigr|\le \frac{K}{6}\,h^3.
\end{equation}
Now, by definition of $\Delta_H$ (\ref{defect}), we have for $(t,h)$ 
\[
H(t+h)+H(t-h)=2H(t)H(h)+\Delta_H(t,h).
\]
Subtract \(2H(t)+a h^2 H(t)\) from both sides, we obtain
\begin{align*}
h^2\bigl(H''(t)-aH(t)\bigr)
&=\bigl(H(t+h)+H(t-h)-2H(t)-h^2H''(t)\bigr)\\
&\quad+\Delta_H(t,h)
+2H(t)\bigl(H(h)-1-\frac{a}{2}h^2\bigr).
\end{align*}
Taking absolute values in the last equation and using \eqref{eq:taylor1}, \eqref{eq:taylor2}, \(|H(t)|\le B\), and
\(|\Delta_H(t,h)|\le\varepsilon\), we obtain
\[
h^2|H''(t)-aH(t)|
\le \frac{K}{3}h^3+\varepsilon+2B\cdot\frac{K}{6}h^3
\le \varepsilon+\frac{(1+B)K}{3}h^3.
\]
Dividing by \(h^2\) yields the uniform bound
\begin{equation}\label{eq:resid}
|H''(t)-aH(t)|\le \delta(h)
\qquad(|t|\le T-h).
\end{equation}

Let \(y(t):=\cosh(\sqrt{a}\,t)\), so \(y''=ay\), \(y(0)=1\), and since \(H\) is even, \(H'(0)=0=y'(0)\).
Define \(e(t):=H(t)-y(t)\). Then \(e\in C^2([-T+h,T-h])\), \(e(0)=e'(0)=0\), and
\[
e''(t)-a e(t)=H''(t)-aH(t),
\]
so by \eqref{eq:resid}, \(|e''(t)-a e(t)|\le\delta(h)\) for \(|t|\le T-h\).
For \(t\in[0,T-h]\), the equation for \(e''=ae+r\) with zero initial condition gives
\[
e(t)=\int_0^t \frac{1}{\sqrt{a}}\sinh(\sqrt{a}(t-s))\,r(s)\,ds,
\]
where \(r(s):=e''(s)-ae(s)\). Hence
\[
|e(t)|
\le \delta(h)\int_0^t \frac{1}{\sqrt{a}}\sinh(\sqrt{a}(t-s))\,ds
=\frac{\delta(h)}{a}\bigl(\cosh(\sqrt{a}\,t)-1\bigr).
\]
Since \(e\) is even as difference of even functions, this bound holds for negative \(t\), yielding
the inequality for all \(|t|\le T-h\).
\end{proof}


{ \begin{Remark}
The use of the Taylor expansion in the proof requires the assumption 
$H\in C^3([-T,T])$. 
The result should be interpreted as a consistency estimate 
under smoothness assumptions rather than a stability result in the 
classical sense.
\end{Remark}}

\begin{Corollary}\label{cor:stability-rp}
Let the assumptions of Theorem~\ref{thm:stability} hold and define
\[
F(x):=H(\ln x)-1,\qquad x\in
{\mathbb{R}_{>0}}.
\]
Then for every \(x\in\bigl(e^{-(T-h)},e^{T-h}\bigr)\),
\[
\bigl|F(x)-\bigl(\cosh(\sqrt{a}\,\ln x)-1\bigr)\bigr|
\;\le\;
\frac{\delta(h)}{a}\Bigl(\cosh\bigl(\sqrt{a}\,|\ln x|\bigr)-1\Bigr).
\]

In particular, for every \(S\in(0,T-h)\) the function \(F\) is uniformly close to
\(\cosh(\sqrt{a}\,\ln x)-1\) on the compact interval \([e^{-S},e^{S}]\).

If moreover \(a\) is close to \(1\) and \(\delta(h)\) is small, then \(F\) is uniformly
close to \(J(x)=\cosh(\ln x)-1\) on \([e^{-S},e^{S}]\).

\medskip

In the special case \(a=1\), the estimate simplifies to
\[
\bigl|F(x)-J(x)\bigr|\le\delta(h)\,J(x),
\qquad x\in\bigl(e^{-(T-h)},e^{T-h}\bigr),
\]
since \(\cosh(|\ln x|)-1=\cosh(\ln x)-1=J(x)\).
\end{Corollary}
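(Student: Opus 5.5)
The plan is to obtain Corollary~\ref{cor:stability-rp} directly from Theorem~\ref{thm:stability} through the substitution $t=\ln x$. No new analysis is needed.

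\textbf{Main estimate.} Fix $h\in(0,T]$ and $x\in\bigl(e^{-(T-h)},e^{T-h}\bigr)$, and put $t:=\ln x$. Since $\ln$ is increasing, $|t|<T-h$, so Theorem~\ref{thm:stability} applies at this $t$. By definition $F(x)=H(\ln x)-1=H(t)-1$, hence
\[
F(x)-\bigl(\cosh(\sqrt{a}\,\ln x)-1\bigr)=H(t)-\cosh(\sqrt{a}\,t).
\]
Substituting $t=\ln x$ into the bound of Theorem~\ref{thm:stability}, and using $|t|=|\ln x|$, gives the displayed inequality with right-hand side $\frac{\delta(h)}{a}\bigl(\cosh(\sqrt{a}\,|\ln x|)-1\bigr)$.

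\textbf{Uniform closeness on compacts.} Take $S\in(0,T-h)$. Then $[e^{-S},e^{S}]$ lies inside the open interval above. Since $\cosh$ is increasing on $[0,\infty)$, for $x$ in this interval the right-hand side is at most $\frac{\delta(h)}{a}\bigl(\cosh(\sqrt{a}\,S)-1\bigr)$. This gives a uniform bound on $[e^{-S},e^{S}]$.

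\textbf{Closeness to $J$.} I would use the triangle inequality:
\[
|F-J|\le\bigl|F-(\cosh(\sqrt{a}\ln x)-1)\bigr|+\bigl|\cosh(\sqrt{a}\ln x)-\cosh(\ln x)\bigr|.
\]
By the mean value theorem, the second term is at most $|\sqrt a-1|\,S\sinh(\max(\sqrt a,1)S)$ on $[e^{-S},e^{S}]$, which tends to $0$ as $a\to1$. Together with the previous bound, this makes $|F-J|$ uniformly small when $a$ is close to $1$ and $\delta(h)$ is small.

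\textbf{Case $a=1$.} The bound becomes $\delta(h)\bigl(\cosh(|\ln x|)-1\bigr)$. Because $\cosh$ is even, this equals $\delta(h)\bigl(\cosh(\ln x)-1\bigr)$. From $\cosh(\ln x)=\tfrac12(x+x^{-1})$ this is $\delta(h)J(x)$.

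\textbf{Main obstacle.} None of the steps is genuinely hard. The only point needing care is the qualitative statement about $a$ near $1$: it must be made precise with the explicit mean-value bound above, and it holds uniformly only on the compact set $[e^{-S},e^{S}]$.
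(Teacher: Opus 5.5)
Your proposal is correct and matches the paper's proof: the paper also substitutes $t=\ln x$ into Theorem~\ref{thm:stability} and uses the evenness of $\cosh$ for the case $a=1$. Your monotonicity bound on $[e^{-S},e^{S}]$ and your mean-value estimate $|\sqrt a-1|\,S\sinh(\max(\sqrt a,1)S)$ also make precise the two ``uniformly close'' claims, which the paper's proof states without argument.
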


\begin{proof}
Apply Theorem~\ref{thm:stability} with \(t=\ln x\). Since \(F(x)=H(t)-1\), we obtain
\[
|F(x)-(\cosh(\sqrt a\,t)-1)|
\le \frac{\delta(h)}{a}\,(\cosh(\sqrt a\,|t|)-1),
\]
which is the required inequality after substituting \(t=\ln x\).

The case \(a=1\) follows from \(\cosh(|\ln x|)-1=\cosh(\ln x)-1=J(x)\).
\end{proof}


\section{Properties of canonical reciprocal cost}\label{sec:properties}

Having shown that the canonical reciprocal cost
\[
J(x)=\frac12(x+x^{-1})-1
\]
is uniquely determined by the composition law on \(\Rp\) and the unit log-curvature calibration, we now give some properties of \(J\) that make it interpretable and useful.

\subsection{Arithmetic and geometric means}
The function \(J(x)\) can be written as
\[
J(x)
=\mathrm{AM}\Big(x,\dfrac 1x\Big)-\mathrm{GM}\Big(x,\dfrac 1x\Big),
\]
since \(\mathrm{GM}(x,1/x)=1\) and \(\mathrm{AM}(x,1/x)=\tfrac12(x+x^{-1})\).
In particular \(J(x)\ge 0\) for all \(x>0\), with equality if and only if \(x=1\).

 
\subsection{Bregman divergence.}
The logarithmic expression of the canonical reciprocal cost is
\[
G(t)=J(\e^t)=\cosh t-1.
\]
Let \(\Phi:\R\to\R\) be defined by \(\Phi(t)=\cosh t\).
Then \(G\) coincides with the Bregman divergence generated by \(\Phi\) at the point \(0\), namely
\[
G(t)=D_\Phi(t,0)
=\Phi(t)-\Phi(0)-\Phi'(0)(t-0),
\]
since \(\Phi(0)=1\) and \(\Phi'(0)=\sinh 0=0\).

Since \(\Phi\in C^2(\R)\) and
\[
\Phi''(t)=\cosh t>0 \qquad \text{for all } t\in\R,
\]
the function \(\Phi\) is strictly convex. Consequently,
\[
D_\Phi(t,0)>0 \quad \text{for } t\neq 0,
\qquad
D_\Phi(0,0)=0,
\]
and \(0\) is the unique global minimizer of \(\Phi\).

Thus, in logarithmic coordinates the canonical cost \(J\) corresponds to the Bregman divergence
\(G(t)=D_\Phi(t,0)\). 
The strict convexity of $\Phi$ implies nonnegativity and a unique minimum at
$t=0$, while the Taylor expansion of $\cosh$ at $0$ gives the required quadratic
behavior.



\subsection{Metric associated with \texorpdfstring{$J$}{J}.}
Starting from the canonical reciprocal cost function
\[
J(x)=\dfrac12\Big(x+\frac1x\Big)-1,
\]
we introduce logarithmic coordinates \(t=\ln x\) and set
\[
G(t)=J(\e^t)=\cosh t-1,
\qquad
\Phi(t)=G(t)+1=\cosh t.
\]
The function \(\Phi\) is smooth and strictly convex on \(\R\), since
\[
\Phi''(t)=\cosh t>0 \qquad \text{for all } t\in\R.
\]
The Hessian metric induced by \(\Phi\) on \(\R\) is
\[
ds^2=\Phi''(t)\,dt^2=\cosh t\,dt^2.
\]
In \(x\)-coordinates, we get
\[
ds^2=\cosh(\ln x)\,\frac{dx^2}{x^2}
=\frac{x^2+1}{2x^3}\,dx^2.
\]
The corresponding Riemannian distance is 
\[
d_J(x,y)
=\left|\int_{\ln x}^{\ln y}\sqrt{\cosh u}\,du\right|
=\left|\int_x^y \sqrt{\frac{\xi^2+1}{2\xi^3}}\,d\xi\right|.
\]

Since \(\cosh\) is an even function, the distance is reciprocally symmetric, i.e.
\[
d_J(x,y)=d_J\Big(\frac1x,\frac1y\Big).
\]

Near \(x=1\) (that is, \(t=0\)), one has \(\cosh t=1+O(t^2)\), and hence
\(d_J(x,y)\) is locally equivalent to the logarithmic distance
\(\lvert \ln y-\ln x\rvert\).
More precisely, there exist constants \(c,C>0\) and a neighborhood \(U\) of \(1\)
such that
\[
c\,\lvert \ln y-\ln x\rvert
\le d_J(x,y)
\le C\,\lvert \ln y-\ln x\rvert,
\qquad x,y\in U.
\]

Globally, the growth of \(d_J\) is qualitatively different from the logarithmic distance because the metric weight \(\sqrt{\cosh u}\) grows exponentially as \(|u|\to\infty\).

As \(|t|\to\infty\), one has \(\cosh t\sim \tfrac12 e^{|t|}\), and therefore
\[
d_J(1,R)\sim \sqrt{2}\,R^{1/2}
\qquad \text{as } R\to\infty,
\]
reflecting the exponential growth of \(\Phi''(t)\) for large \(|t|\).
\subsection{Chebyshev structure of \texorpdfstring{$J$}{J}}

It is known that normalized solutions of d’Alembert equation
\[
H(m+n)+H(m-n)=2H(m)H(n), \qquad m,n\in\mathbb Z,
\]
can be expressed in terms of Chebyshev polynomials (see, for example, \cite{Davison}).
In particular, such solutions satisfy 

\begin{equation}\label{rec}
H_{n+1}=2H_1H_n-H_{n-1}
\end{equation}
and can be written in the form \(H_n=T_n(H_1)\), where \(T_n\) denotes the
Chebyshev polynomials.

We now show how this discrete Chebyshev structure is realized by the canonical reciprocal cost.
Let \(H(t)=J(e^t)+1=\cosh t\), which satisfies the continuous d’Alembert equation \eqref{dal}.
Fix \(t=\ln x\) and define the discrete sequence \(H_n:=H(nt)\) for \(n\in\mathbb{Z}\). Then \((H_n)\) satisfies the recursion \eqref{rec}, and
\[
J(x^n)+1=\cosh(n\ln x)
= T_n(\cosh(\ln x))
= T_n(J(x)+1),
\]
that is,
\[
J(x^n)=T_n(J(x)+1)-1 ,\qquad n\in\mathbb Z.
\]

Since \(H_1=\cosh(\ln x)\ge 1\), the Chebyshev identity \(T_n(\cosh s)=\cosh(ns)\) applies (with \(s=\ln x\)), yielding \(H_n=\cosh(n\ln x)\).
In particular, for \(x\neq 1\) we have \(H_1>1\), so the oscillatory cosine branch (which takes values in \([-1,1]\)) is not compatible with the canonical cost $J$.

Thus, the Chebyshev recursion associated with the discrete d’Alembert equation
is realized explicitly by the functional \(J\).




\subsection{Energy interpretation}
Although \(J\) is characterized here only by a functional equation, it is natural to view it heuristically as an energy-like penalty for multiplicative imbalance.
Indeed, \(J\ge 0\) with a unique minimizer at equilibrium \(x=1\), and in logarithmic coordinates \(x=e^t\) one has
\[
J(e^t)=\cosh t-1=\frac{t^2}{2}+O(t^4)\qquad (t\to 0),
\]
which matches the standard second-order behavior near a stable equilibrium in many variational models.
This interpretation should be read as an analogy: \(J\) is not claimed to be a Hamiltonian energy of a specific dynamical system.

\section{Conclusion}

 This paper studies a rigidity problem for functions 
\(F:\mathbb{R}_{>0}\to\mathbb{R}\) satisfying a polynomial composition law.
The normalization \(F(1)=0\) follows from the unit curvature calibration.
We determine which structural assumptions force \(F\) to have a unique functional form.

Under the composition law \eqref{comp} on \(\R_{>0}\)
and the unit log-curvature calibration \eqref{behavior0},
\(F\) is uniquely determined.
The resulting function is the canonical reciprocal cost
\[
J(x)=\frac12(x+x^{-1})-1,
\]
equivalently \(J(e^t)=\cosh(t)-1\) in logarithmic coordinates.

\smallskip

The composition law becomes d’Alembert’s functional equation \eqref{dal} on \(\R\),
and the calibration becomes a curvature condition at \(t=0\).
The calibration plays two distinct roles: it provides minimal regularity
so that the classical classification of solutions of d’Alembert’s equation applies,
and it fixes the remaining scale parameter that otherwise yields the family
\(H(t)=\cosh(\lambda t)\).
Consequently, the calibrated solution is forced onto the single branch
\(H(t)=\cosh(t)\), hence \(F=J\).

\smallskip

Section~\ref{sec:results} also establishes the minimality of the assumptions.
The composition law together with \(F(1)=0\) implies reciprocity \(F(x)=F(x^{-1})\).
Without fixing the calibration one obtains a one-parameter family of continuous
solutions \(F_\lambda(x)=\cosh(\lambda\ln x)-1\).
On the other hand, the normalization \(F(1)=0\) together with the calibration
does not imply the composition law \eqref{comp}
(see Proposition~\ref{prop:no-law-counterexample}).
Finally, even under the normalization \(F(1)=0\),
the composition law \eqref{comp} admits non-measurable solutions
(see Proposition~\ref{prop:pathological}).
Hence some additional regularity assumption is necessary.

\smallskip

Several directions remain for further study.
An open question is whether the composition law \eqref{comp} can be derived
from weaker assumptions, rather than being imposed explicitly.
Another direction is to consider alternative calibration conditions and to classify
other polynomial-type relations between \(F(xy)+F(x/y)\) and the pair \((F(x),F(y))\).

It would also be of interest to extend the analysis to other multiplicative structures,
such as positive definite matrices \cite{Bhatia} or more general structures.

\end{document}